\theoremstyle{plain}
\newtheorem{theorem}{Theorem}[section]
\newtheorem*{theorem*}{Theorem}
\newtheorem{lemma}[theorem]{Lemma}
\newtheorem{proposition}[theorem]{Proposition}
\newtheorem*{proposition*}{Proposition}
\newtheorem*{conjecture*}{Conjecture}
\theoremstyle{definition}
\newtheorem{definition}[theorem]{Definition}
\theoremstyle{remark}
\newtheorem{remark}[theorem]{Remark}
\DeclareMathOperator{\supp}{supp}
\renewcommand{\d}{\partial}
\newcommand{\db}{\overline{\partial}}
\newcommand{\Ca}{{\db^{-1}}}
\newcommand{\Cab}{{\d^{-1}}}
\newcommand{\conjCa}{\mathcal{T}}
\newcommand{\abs}[1]{\left\lvert #1 \right\rvert}
\newcommand{\norm}[1]{\left\lVert #1 \right\rVert}
\newcommand{\R}{\mathbb{R}}
\newcommand{\C}{\mathbb{C}}
\newcommand{\N}{\mathbb{N}}
\renewcommand{\d}{\partial}
\newcommand{\SinfinfExp}{\alpha}
\title{Uniqueness for the inverse boundary value problem with singular potentials in 2D} \author{Eemeli Bl{\aa}sten\thanks{HKUST Jockey Club Institute for Advanced Study, Hong Kong.
Email: iaseemeli@ust.hk}\qquad Leo Tzou\thanks{Faculty of Science, University of Sydney, Sydney, Australia. Email: leo.tzou@sydney.edu.au}\qquad Jenn-Nan Wang\thanks{Institute of Applied Mathematical Sciences, NCTS, National Taiwan University, Taipei 106, Taiwan. Email: jnwang@math.ntu.edu.tw}}
\begin{document}
\maketitle
\begin{abstract}
In this paper we consider the inverse boundary value problem for the Schr\"odinger equation with potential in $L^p$ class, $p>4/3$. We show that the potential is uniquely determined by the boundary measurements.
\end{abstract}

\section{Introduction}

In this work we study the inverse boundary value problem for the Schr\"odinger equation with singular potentials in the plane. Let $\Omega\subset\R^2$ be an open bounded domain with Lipschitz boundary $\partial\Omega$. Let $q\in L^p(\Omega)$ with $p>1$ and assume that $0$ is not a Dirichlet eigenvalue of the Schr\"odinger operator $\Delta+q$ in $\Omega$. Then the Dirichlet-Neumann map $\Lambda_q:u|_{\partial\Omega}\mapsto\partial_{\nu}u|_{\partial\Omega}$, where $u$ satisfies $\Delta u+qu=0$ in $\Omega$, is well-defined (see \cite[Lemma~5.1.3]{blasten} for the precise statement). Here we are concerned with the unique determination of $q$ from the knowledge of $\Lambda_q$, namely, whether 
\begin{equation}\label{fq}
\Lambda_{q_1}=\Lambda_{q_2}\Rightarrow q_1=q_2.
\end{equation}
We will prove that \eqref{fq} is indeed true for $L^p(\Omega)$ potentials with $p>4/3$.   

Our method follows from the strategies introduced by Bukhgeim in \cite{bukhgeim} where he showed that \eqref{fq} holds true for $C^1$ potentials. The key ingredient in Bukhgeim's method is the invention of special complex geometrical optics solutions with non-degenerate singular phases, namely, $\Phi(z)=(z-z_0)^2$, $z,z_0\in\C$. Using this type of complex geometrical optics solutions, Bukhgeim was able to prove the global uniqueness by the method of stationary phase. Bukhgeim's result was later improved to $L^p(\Omega)$ with $p>2$ in \cite{imanuvilovYamamotoLp} and \cite{Blasten--Imanuvilov--Yamamoto}.  

In this paper, we push the uniqueness result even further to $p>4/3$. To do so, we need to prove the existence of complex geometrical optics solutions with phase $\Phi$ for such potentials. In fact, we show that such complex geometrical optics solutions exist for all potentials in $L^p(\Omega)$ with $p>1$. The improvement relies on a new estimate for the conjugated Cauchy operator (see Lemma~\ref{fundLpEstim}). Having constructed the complex geometrical optics solutions, we then perform the usual step --- substituting such special solutions into Alessandrini's identity. In order to obtain the dominating term containing the difference of potentials in the method of stationary phase, we need to derive more refined estimates of terms of various orders in Alessandrini's identity. In this step, we need to use the fact that the knowledge the DN map improves the integrability of the potential. In other words, $\Lambda_{q_1}=\Lambda_{q_2}$ for $q_1, q_2\in L^p(\Omega)$ with $3/4<p<2$ implies $q_1-q_2\in L^2(\Omega)$ (see \cite{Serov--Paivarinta}).  

We would also like to mention another related paper. It was shown in \cite[Thm 2.3]{lakshtanov2016recovery} that if $p>1$ then for every $z_0 \in \Omega$ there exists a \emph{generic set} of potentials in $L^p$ for which its value in a neighbourhood of $z_0$ is recoverable. It was also remarked in \cite{lakshtanov2016recovery} that the neighbourhood of $z_0$ also depends on the chosen potential in the generic family which is determined by the choice of $z_0\in \Omega$. Though the assumption on the $L^p$ space of the potential is more general, the dependence of the generic set on the choice of the point $z_0\in\Omega$ and the dependence of the neighbourhood on the potential makes it unclear how a global identifiability result would follow from  \cite[Thm 2.3]{lakshtanov2016recovery}.

Intuitively, the uniqueness of the inverse boundary value problem is strongly related to the unique continuation property.  For higher dimensions $(n\ge 3)$, it is known that the unique continuation holds for any solution $u\in H^{2,\frac{2n}{n+2}}_{loc}(\Omega)$ when $q\in L^{n/2}_{loc}$ (scale-invariant potentials) \cite{Jerison-Kenig-Unique}, where $H^{2,s}_{loc}(\Omega)=\{u\in L^1_{loc}(\Omega):\Delta u\in L^s_{loc}(\Omega)\}$. In this situation, the global uniqueness of the inverse boundary value problem with $q\in L^{n/2}$ was established in  \cite{DSFKS13} (also see related result in \cite{Haberman16} for $n=3, q\in W^{-1,3}$). When $n=2$, the unique continuation holds relative to $u\in H^{2,s}_{loc}$ for $q\in L^p_{loc}$ with any $p>1$, where $s=\max\{1,\frac{2p}{p+2}\}$ \cite{ABG81}. Prior to \cite{ABG81}, a weaker result which stated that the unique continuation property holds relative to $u\in H^{2,2}_{loc}$ for $q\in L^p_{loc}$ with $p>4/3$ was proved in \cite{Saut-Scheurer-1980}. Our uniqueness theorem of the inverse boundary value problem is consistent with the unique continuation result relative to $u\in H^{2,2}_{loc}$. It remains an interesting problem to close the gap of the uniqueness theorem for the inverse boundary value problem for $q\in L^p(\Omega)$ with $1<p\le 4/3$.

\section{Main results}
\begin{theorem} \label{uniquenessThm}
  Let $q_1,q_2\in L^p(\Omega)$ with $4/3<p<2$. Assume $0$ is not a
  Dirichlet-eigenvalue of either potential and that their
  Dirichlet-Neumann maps are identical $\Lambda_{q_1} =
  \Lambda_{q_2}$. Then $q_1=q_2$.
\end{theorem}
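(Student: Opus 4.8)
The plan is to follow Bukhgeim's strategy with the CGO solutions built from the non-degenerate phase $\Phi(z)=(z-z_0)^2$, combined with the improved operator estimates announced in the introduction. Let me sketch the structure before I see the authors' argument.

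=== PROOF PROPOSAL ===

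The plan is to follow Bukhgeim's original strategy, substituting complex geometrical optics (CGO) solutions built from the non-degenerate phase $\Phi(z)=(z-z_0)^2$ into Alessandrini's identity, and then extracting the difference of potentials by the method of stationary phase. The starting point is that $\Lambda_{q_1}=\Lambda_{q_2}$ yields, via Alessandrini's identity, the integral relation $\int_\Omega (q_1-q_2)u_1 u_2\,dz=0$ for all pairs $u_1,u_2$ of solutions to $\Delta u_j+q_j u_j=0$ whose boundary traces are compatible. The task is thus to choose $u_1,u_2$ so that the product $u_1 u_2$ concentrates, as a parameter $\tau\to\infty$, around an arbitrary interior point $z_0$. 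Following Bukhgeim one seeks solutions of the form $u_1=e^{\tau\Phi}(1+r_1)$ and $u_2=e^{-\tau\overline{\Phi}}(1+r_2)$ (up to conjugation conventions), so that $e^{\tau\Phi-\tau\overline{\Phi}}$ furnishes an oscillatory factor $e^{2i\tau\,\mathrm{Im}\,\Phi}$ whose critical point is precisely $z_0$; the Hessian there is non-degenerate because $\Phi''(z_0)=2\neq 0$.

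The first technical step is to \emph{construct the CGO solutions}, i.e.\ to solve the remainder equation for $r_j$. Writing $\Delta=4\partial\db$ and conjugating by $e^{\pm\tau\Phi}$, one obtains an equation of the form $\db(\d r_j)+(\text{first-order phase term})=-\tfrac14 q_j(1+r_j)$, which is inverted using the conjugated Cauchy operator $\conjCa$. Here the crucial input is Lemma~\ref{fundLpEstim}: the new $L^p$ estimate for the conjugated Cauchy operator must provide a bound whose operator norm decays as $\tau\to\infty$, so that a Neumann series / contraction argument produces $r_j$ with $\norm{r_j}\to 0$ as $\tau\to\infty$. This is exactly the step where the improvement to $p>1$ (rather than $p>2$) enters, and it is where I would expect the genuinely new analytic work to lie. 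The decay rate in $\tau$ one obtains from this estimate will govern which error terms are negligible at the stationary-phase stage.

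With CGO solutions in hand, the second step is the \emph{asymptotic analysis of Alessandrini's identity}. After inserting $u_1,u_2$, the identity splits into a main term $\int_\Omega (q_1-q_2)\,e^{2i\tau\,\mathrm{Im}\,\Phi}\,dz$ and several lower-order terms involving $r_1$, $r_2$, and their products against $(q_1-q_2)$. By stationary phase applied to the main term, $\tau\int_\Omega (q_1-q_2)e^{2i\tau\,\mathrm{Im}\,\Phi}\,dz$ converges (up to a nonzero constant) to $(q_1-q_2)(z_0)$, provided $q_1-q_2$ is regular enough at the Lebesgue point $z_0$. The delicate point is that for $q_j\in L^p$ with $p<2$ one cannot apply stationary phase naively; this is precisely why one invokes the bootstrap from \cite{Serov--Paivarinta}, namely that $\Lambda_{q_1}=\Lambda_{q_2}$ forces $q_1-q_2\in L^2(\Omega)$. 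Upgrading the difference to $L^2$ is what makes the stationary-phase asymptotics legitimate and simultaneously controls the cross terms.

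The main obstacle, and the heart of the argument, is to \emph{show that all the remainder contributions vanish in the limit faster than the main term}. Each error term is a product of $(q_1-q_2)\in L^2$ with $r_j$ (or $r_1 r_2$) integrated against the oscillatory exponential; to show it is $o(\tau^{-1})$ one must combine Hölder's inequality, the $L^{p'}$-type control on $r_j$ from the construction step, and the $\tau$-decay of $\norm{r_j}$. Balancing the integrability exponents so that the product lands in $L^1(\Omega)$ while retaining a quantitative decay rate in $\tau$ is the calculation that forces the threshold $p>4/3$: the borderline integrability of $r_j$ against an $L^2$ potential difference, weighed against the stationary-phase gain of $\tau^{-1}$, is exactly what degrades as $p\downarrow 4/3$. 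Once every remainder is shown to be $o(\tau^{-1})$, multiplying the identity by $\tau$ and letting $\tau\to\infty$ yields $(q_1-q_2)(z_0)=0$ at almost every $z_0\in\Omega$, hence $q_1=q_2$.
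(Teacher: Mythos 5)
Your overall architecture (Alessandrini's identity, CGO solutions with phase $(z-z_0)^2$ built by a Neumann series from the conjugated Cauchy operator estimate of Lemma~\ref{fundLpEstim}, the Serov--P\"aiv\"arinta bootstrap giving $q_1-q_2\in L^2(\Omega)$, and a stationary-phase limit) matches the paper's. But there are two genuine gaps in how you close the argument.

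The serious one: your claim that \emph{every} remainder contribution is $o(\tau^{-1})$, so that after multiplying by $\tau$ only the main term survives, is false for the first-order terms. With the ansatz $u_j = e^{i\tau\Phi}(1+r_j)$, the first correction in $r_j$ is $\varphi_j$, essentially $\conjCa(\chi\,\Cab q_j)$, and the term $\tau\int(q_1-q_2)e^{i\tau(\Phi+\overline\Phi)}\varphi_j\,dx$ is exactly of order $1$: moving the Cauchy operator onto $q_1-q_2$ by Fubini and applying the very same stationary-phase operator, it converges (in $L^2$ with respect to $z_0$) to a nonzero limit of the form $\tfrac14\chi\,\Ca q_2\,\Cab(q_1-q_2)$; this is the content of Lemma~\ref{1orderTerms}. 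No balancing of H\"older exponents can make this term vanish --- it is a structural contribution, not an error to be estimated away, and this is precisely the known obstruction in Bukhgeim's scheme for rough potentials. The paper's fix, inherited from \cite{imanuvilovYamamotoLp,Blasten--Imanuvilov--Yamamoto}, is to build a free parameter $\beta_j(z_0)$ into the CGO ansatz itself (Definition~\ref{varphiDef}), which replaces the limit by $\tfrac14\chi(\Ca q_2-\beta_2)\Cab(q_1-q_2)$; choosing smooth $\beta_j$ with $\norm{\beta_1-\Cab q_1}_{L^{p*}(X)}<\varepsilon$ and $\norm{\beta_2-\Ca q_2}_{L^{p*}(X)}<\varepsilon$ makes this limit arbitrarily small, and only then does the identity force $\norm{q_1-q_2}_{L^2}\leq C\varepsilon$. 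Your proposal contains no analogue of this device, and without it the proof does not close.

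The second gap is your use of pointwise stationary phase: ``$\tau\int(q_1-q_2)e^{2i\tau\,\mathrm{Im}\,\Phi}\,dz \to c\,(q_1-q_2)(z_0)$ at Lebesgue points.'' For $q_1-q_2$ merely in $L^2$, almost-everywhere convergence of these oscillatory means is delicate: on the Fourier side the operator is multiplication by $e^{i(\xi^2+\overline\xi^2)/(16\tau)}$, a Schr\"odinger-type evolution, and pointwise a.e.\ convergence of such evolutions to the data is a hard problem that does not come for free at $L^2$ regularity. The paper avoids it entirely by proving only $L^2(\C)$-convergence in the $z_0$ variable (Lemma~\ref{statPhase}, via Parseval and dominated convergence), which is why all the error lemmas (Lemmas \ref{highOrderTerms}--\ref{0orderTerms}) are stated either uniformly in $z_0$ or in the $L^2_{z_0}$ norm, and the conclusion is reached as an $L^2(\Omega)$ bound on $q_1-q_2$ rather than pointwise. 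You should recast your limiting argument in that norm.
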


Let us fix some notational convention before stating our second
theorem. Throughout this text we shall always assume the following,
which we call \emph{the usual assumptions}: Let $\Omega,X\subset\R^2$
be bounded domains and $\overline\Omega \subset\subset X$. Fix a
cut-off function $\chi\in C^\infty_0(X)$ such that $\chi\equiv1$ on
$\Omega$. Also, whenever we let a function belong to $L^p(\Omega)$ for
any $p$ we automatically extend it by zero to $L^p(X)$. Finally, let
$\tau>1$ and $z_0\in\C \equiv \R^2$ and set $\Phi(z) =
(z-z_0)^2$. Moreover should $\varphi_j$ or $S_j$ be mentioned, then
they refer to definitions \ref{varphiDef} and \ref{Sdef}. We now state the existence of complex geometrical optics solutions.

\begin{theorem} \label{CGOexistence}
  Let the usual assumptions hold and $q_j \in L^p(\Omega)$ with
  $1<p<2$. For $j\in\{1,2\}$ let $\beta_j = \beta_j(z_0)$ be uniformly
  bounded over a parameter $z_0\in\C$. Then for any $z_0$ we may
  define the function $\varphi_j$ from Definition~\ref{varphiDef} and
  the series
  \[
  f_j(z) = \sum_{m=0}^\infty F_{j,m}(z) = e^{-i\tau(\Phi+\overline\Phi)} +
  \varphi_j(z) + S_j \varphi_j(z) + \ldots
  \]
  from Definition \ref{fDef}. The latter converges uniformly in the
  variable $z \in X$ for $\tau$ large enough and
  \[
  \norm{F_{j,m}}_\infty \leq (C \tau^{-\SinfinfExp})^m
  \]
  for some $C=C(p,\chi,q_j,\sup_{z_0}\abs{\beta_j})$, where
  $\SinfinfExp>0$ is a constant obtained in Proposition
  \ref{Sestimates}. Moreover we have $f_j \in W^{1,2}(X)$. Lastly,
  $(\Delta+q_1)(e^{i\tau\Phi}f_1) = 0$ and
  $(\Delta+q_2)(e^{i\tau\overline\Phi}f_2) = 0$ in $\Omega$.
\end{theorem}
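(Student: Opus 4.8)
The plan is to realise $f_1$ as the Neumann series of a Lippmann--Schwinger-type fixed-point equation and to derive every assertion from the single operator bound supplied by Proposition~\ref{Sestimates}; I argue for $f_1$ (phase $\Phi$), the case of $f_2$ being identical after exchanging the roles of $\d$ and $\db$ (equivalently $\Ca$ and $\Cab$), since $\overline\Phi$ is antiholomorphic where $\Phi$ is holomorphic. First I would record the conjugated equation: writing $\Delta=4\d\db$ and using $\db\Phi=0$, a short computation gives the exact identity
\[
(\Delta+q_1)(e^{i\tau\Phi}f_1)=4\,\d\!\left(e^{i\tau\Phi}\,\db f_1\right)+e^{i\tau\Phi}q_1 f_1 \quad\text{on }\Omega,
\]
so that solving the Schr\"odinger equation is equivalent to a first-order $\db$-problem whose right-hand side is inverted by the two Cauchy operators $\Cab$ and $\Ca$. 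This is exactly the mechanism behind Definitions~\ref{varphiDef}--\ref{fDef}: in their normalisation $S_1$ is (up to sign and constant) $g\mapsto-\tfrac14\Ca\big(e^{-i\tau\Phi}\Cab(e^{i\tau\Phi}q_1 g)\big)$, the leading profile is $F_{1,0}=e^{-i\tau(\Phi+\overline\Phi)}$, and $\varphi_1=F_{1,1}=S_1F_{1,0}$, so that $F_{1,m}=S_1^{\,m}F_{1,0}$ and $f_1=\sum_{m\ge0}F_{1,m}$ is the formal solution of $(I-S_1)f_1=F_{1,0}$. The parameter $\beta_1=\beta_1(z_0)$ enters only through the normalisation of the inverse Cauchy operators (which carry an (anti)holomorphic ambiguity), and its uniform boundedness in $z_0$ is precisely what will keep the final constant $C$ independent of $z_0$.

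Next, for convergence and the geometric bound I would simply invoke Proposition~\ref{Sestimates}, which yields $\norm{S_1 g}_\infty\le C\tau^{-\SinfinfExp}\norm{g}_\infty$ and $\norm{\varphi_1}_\infty\le C\tau^{-\SinfinfExp}$ with $C=C(p,\chi,q_1,\sup_{z_0}\abs{\beta_1})$. Since $\Phi+\overline\Phi$ is real-valued we have $\norm{F_{1,0}}_\infty=1$, and an immediate induction gives $\norm{F_{1,m}}_\infty\le(C\tau^{-\SinfinfExp})^m$. Choosing $\tau$ so large that $C\tau^{-\SinfinfExp}<1$, the series is dominated by a convergent geometric series and hence converges uniformly on $X$ with the stated bound; in particular $f_1\in L^\infty(X)$.

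Granting $f_1\in L^\infty$ and $q_1\in L^p$, so that $q_1 f_1\in L^p(X)$, the $W^{1,2}(X)$ regularity I would read directly off the fixed-point equation: $\db f_1=\db F_{1,0}-\tfrac14 e^{-i\tau\Phi}\Cab(e^{i\tau\Phi}q_1 f_1)$, while $\d f_1=\d F_{1,0}-\tfrac14\,\d\Ca h$ with $h=e^{-i\tau\Phi}\Cab(e^{i\tau\Phi}q_1 f_1)$ and $\d\Ca$ the (bounded) Beurling transform. Since $1<p<2$, the Cauchy transform maps $L^p\to W^{1,p}\hookrightarrow L^{2p/(2-p)}$ with $2p/(2-p)>2$, so both interior terms lie in $L^2_{loc}$; together with the smooth bounded $F_{1,0}$ on the bounded set $X$ this places $\d f_1,\db f_1\in L^2(X)$, hence $f_1\in W^{1,2}(X)$.

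Finally, the equation itself I would verify from the closed form $(I-S_1)f_1=F_{1,0}$ rather than by differentiating the infinite series termwise. Substituting $f_1=F_{1,0}+S_1 f_1$ into the conjugated identity and using $\db\Ca=I$, $\d\Cab=I$, the potential term $e^{i\tau\Phi}q_1 f_1$ cancels exactly against the contribution produced by $S_1 f_1$, leaving $4\,\d(e^{i\tau\Phi}\db F_{1,0})$; and since $e^{i\tau\Phi}\db F_{1,0}=-i\tau(\db\overline\Phi)\,e^{-i\tau\overline\Phi}$ is antiholomorphic, this term vanishes---which is exactly why $F_{1,0}=e^{-i\tau(\Phi+\overline\Phi)}$ is the correct leading profile. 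The genuinely hard estimate, namely the $\tau^{-\SinfinfExp}$ smoothing of the conjugated Cauchy operator for potentials with $p$ only slightly above $1$, is quarantined in Proposition~\ref{Sestimates} (ultimately Lemma~\ref{fundLpEstim}); granting it, the one remaining subtlety is to legitimise the cancellation above, i.e. that one may apply $\d,\db$ to the integral equation and multiply by the singular $q_1$. I expect this to be the main technical point, and I would settle it by staying at the level of the fixed-point identity, where the $W^{1,2}$ regularity already proved makes the use of $\db\Ca=I$, $\d\Cab=I$ rigorous and the equality valid in the sense of distributions on $\Omega$.
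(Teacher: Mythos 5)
Your overall strategy is the paper's: cite Proposition~\ref{Sestimates} for the geometric decay of the Neumann series, read off the $W^{1,2}$ regularity from the mapping properties of $\Ca,\Cab$ (Lemma~\ref{cauchyNorm} plus Sobolev embedding), and verify the PDE from the fixed-point identity rather than termwise. The convergence and regularity parts are essentially fine, up to one attribution point: the bound $\norm{\varphi_j}_\infty\le C\tau^{-\SinfinfExp}$ is not part of Proposition~\ref{Sestimates} but is Lemma~\ref{varphiEstimates}, which additionally needs Lemma~\ref{fundLinftyEstim} to handle the $\beta_j$-term.

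There is, however, a genuine error in your verification of the equation. You identify $\varphi_1$ with $S_1F_{1,0}$ and hence take the fixed-point equation to be $(I-S_1)f_1=F_{1,0}$. This is false unless $\beta_1=0$: by Definitions~\ref{Sdef} and~\ref{varphiDef},
\[
\varphi_1-S_1F_{1,0}=\tfrac14\,\beta_1(z_0)\,\Ca\big(e^{-i\tau(\Phi+\overline\Phi)}\chi\big),
\]
and correspondingly $F_{1,m}=S_1^{m-1}\varphi_1\neq S_1^{m}F_{1,0}$. The identity the series actually satisfies is $f_1=F_{1,0}+\varphi_1+S_1(f_1-F_{1,0})$; substituting $f_1=F_{1,0}+S_1f_1$ therefore proves the PDE for a different function ($\sum_m S_1^mF_{1,0}$), not for the $f_1$ of Definition~\ref{fDef} that the theorem asserts is a solution --- and the theorem must hold for arbitrary uniformly bounded $\beta_j$, since the main uniqueness proof chooses $\beta_j$ to approximate $\Cab q_1$ and $\Ca q_2$. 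The missing step is the observation that $D_1\varphi_1=q_1e^{-i\tau(\Phi+\overline\Phi)}$ in $\Omega$: computing $\d\big(e^{i\tau(\Phi+\overline\Phi)}\db\varphi_1\big)=\tfrac14(\d\chi)(\beta_1-\Cab q_1)-\tfrac14\chi q_1$, the $\beta_1$-contribution dies on $\Omega$ because $\beta_1$ is constant in $z$ and $\d\chi=0$ there. With that (plus $D_1F_{1,0}=0$ and $D_1S_1f=q_1f$ in $\Omega$) the correct identity yields $D_1f_1=q_1f_1$, which is exactly the paper's argument. Your remark that $\beta_1$ ``enters only through the normalisation of the Cauchy inverse'' gestures at this but is not a proof; note also that the cutoff $\chi$ (which you dropped from $S_1$, along with the $\overline\Phi$ in the conjugating phases) is what localises this cancellation to $\Omega$ and cannot be omitted.
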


\section{Notation and CGO solution buildup}
In this section we shall start by defining the operators and notation
used in the rest of the paper. At the same time we reduce the
construction of Bukhgeim-type \cite{bukhgeim} complex geometrical
optics solutions to an integral equation.

We shall use the complex geometrical optics solutions for
$(\Delta+q_j)u_j=0$ of the form
\[
u_1 = e^{i\tau\Phi} f_1, \qquad u_2 = e^{i\tau\overline\Phi} f_2.
\]
The special form of $f_1,f_2$ which we are going to use was first
defined in \cite{imanuvilovYamamotoLp} and used in
\cite{Blasten--Imanuvilov--Yamamoto} for proving uniqueness for the
boundary value inverse problem when the potentials are in $L^p$,
$p>2$.

\begin{definition} \label{dDef}
  With the usual assumptions, define the differential operators
  \begin{align*}
    D_1 f &= -4 e^{-i\tau(\Phi+\overline\Phi)} \d(
    e^{i\tau(\Phi+\overline\Phi)} \db f),\\ D_2 f &= -4
    e^{-i\tau(\Phi+\overline\Phi)} \db( e^{i\tau(\Phi+\overline\Phi)}
    \d f ).
  \end{align*}
\end{definition}

\begin{lemma} \label{conjPDE}
  Let the usual assumptions hold. For $q_j \in L^1(\Omega)$ and $f_j
  \in L^\infty(\Omega)$ we have
  \begin{align*}
    (\Delta+q_1)( e^{i\tau\Phi}f_1 ) &= 0 \qquad \Leftrightarrow
    \qquad D_1f_1 = q_1f_1,\\ (\Delta+q_2)( e^{i\tau\overline\Phi}f_2 )
    &= 0 \qquad \Leftrightarrow \qquad D_2f_2 = q_2f_2,
  \end{align*}
  all in $\Omega$.
\end{lemma}
\begin{proof}
  Use $\Delta = 4 \d\db = 4 \db\d$ for distributions on $\Omega$.
\end{proof}

For inverting the operators $D_1$ and $D_2$ we will have to use
conjugated versions of the \emph{Cauchy operators} $\Ca$ and
$\Cab$. We have included a short reminder of their properties in
Section \ref{cauchySection}.

\begin{definition} \label{Sdef}
  Let the usual assumptions hold. The we define the operators
  $S_1,S_2$ acting on $f\in L^\infty(X)$ by
  \begin{align*}
    S_1f &= -\frac{1}{4} \Ca\big( e^{-i\tau(\Phi+\overline\Phi)} \chi
    \Cab( e^{i\tau(\Phi+\overline\Phi)} q_1 f) \big), \\ S_2f &=
    -\frac{1}{4} \Cab\big( e^{-i\tau(\Phi+\overline\Phi)} \chi \Ca(
    e^{i\tau(\Phi+\overline\Phi)} q_2 f) \big).
  \end{align*}
\end{definition}

\begin{remark}
  We have $D_j S_j f = q_j f$ in $\Omega$ (but not in $X \setminus
  \Omega$).
\end{remark}

\begin{definition} \label{varphiDef}
  Let the usual assumptions hold and $q_j \in L^1(\Omega)$. Then for
  any given $z_0\in\C$ and $\beta_j = \beta_j(z_0)$, we define
  functions of $z\in X$ by
  \begin{align*}
    \varphi_1 &= \frac{1}{4} \Ca\big( e^{-i\tau(\Phi+\overline\Phi)}
    \chi (\beta_1(z_0) - \Cab q_1)\big), \\ \varphi_2 &= \frac{1}{4}
    \Cab\big( e^{-i\tau(\Phi+\overline\Phi)} \chi (\beta_2(z_0) - \Ca
    q_2)\big).
  \end{align*}
  Note that $\Cab q_1$ and $\Ca q_2$ inside the parenthesis do not
  depend on $z_0$. For example
  \[
  \varphi_1(z) = \frac{1}{4\pi} \int \frac{
    e^{-i\tau((z'-z_0)^2+(\overline{z}'-\overline{z_0})^2)} \chi(z')
    \left( \beta_1(z_0) - \Cab q_1(z')\right) }{z-z'} dm(z').
  \]
\end{definition}

\begin{definition} \label{fDef}
  Let the usual assumptions hold. For $j\in\{1,2\}$ define
  \[
  f_j = e^{-i\tau(\Phi+\overline\Phi)} + \sum_{m=0}^\infty S_j^m
  \varphi_j
  \]
  where $S_j$ is as in Definition \ref{Sdef} and $\varphi_j$ as in
  Definition \ref{varphiDef}. For convenience we write
  \[
  F_{j,0} = e^{-i\tau(\Phi+\overline\Phi)}, \qquad F_{j,m} = S_j^{m-1}
  \varphi_j
  \]
  when $m\in\N$, $m\geq1$. Hence $f_j = \sum_{m=0}^\infty F_{j,m}$.
\end{definition}

We have made enough definitions now to show the structure of the
complex geometrical optics solutions. Given $z_0\in\C$ and $\beta_j =
\beta_j(z_0)$ we can show formally that if
\[
u_1 = e^{i\tau\Phi} f_1, \qquad u_2 = e^{i\tau\overline\Phi} f_2,
\]
then $(\Delta+q_j)u_j=0$ in $\Omega$. This follows from writing
\[
f_j = e^{-i\tau(\Phi+\overline\Phi)} + \varphi_j + S_j(f_j -
e^{-i\tau(\Phi+\overline\Phi)}),
\]
applying $D_j$ and noting that
$D_j(e^{-i\tau(\Phi+\overline\Phi)})=0$, $D_j S_j$ is the
multiplication operator by $q_j$ in $\Omega$, and $D_j \varphi_j = q_j
e^{-i\tau(\Phi+\overline\Phi)}$ in $\Omega$. Then $D_j f_j = q_j f_j$
and Lemma \ref{conjPDE} gives the rest. For proving actual existence
and estimates, see the proof at the end of the next section.

\section{Estimates for conjugated operators and CGO existence}

In this section we will start by showing that a fundamental operator
of the form $a \mapsto \Ca( e^{-i\tau(\Phi+\overline\Phi)} a )$ has
decay properties as $\tau\to\infty$. Section \ref{cutoffs} contains
the technical cut-off function estimates. Once estimates for this
fundamental operator have been shown we can prove the required
estimates for $S_j$ from Definition \ref{Sdef}. At the end of this
section all the details for proving the existence of complex
geometrical optics solutions for $L^p$-potentials with $1<p<2$, i.e.
Theorem \ref{CGOexistence}, will be given. From now on, we define
${p*}$ satisfying the relation
\[
\frac{1}{p}=\frac{1}{2}+\frac{1}{p*}.
\]
Thus, we have $p*>2$.

\begin{definition} \label{Tdef}
  Let the usual assumptions hold. Define the operator $\conjCa$ by
  \[
  \conjCa a = \Ca( e^{-i\tau(\Phi + \overline\Phi)} a ).
  \]
\end{definition}

\begin{lemma} \label{fundLinftyEstim}
  Let the usual assumptions hold and $\conjCa$ as in Definition
  \ref{Tdef}. Then, for ${p*}>2$ we can extend $\conjCa$ to a mapping
  $W^{1,{p*}}_0(X) \to L^\infty(X)$ with norm estimate
  \[
  \norm{\conjCa a}_{L^\infty(X)} \leq C \tau^{-1/{p*}}
  \norm{a}_{W^{1,{p*}}(X)},
  \]
  where $W^{1,{p*}}_0(X)$ is the completion of $C^\infty_0(X)$ under
  the $W^{1,{p*}}$-norm.
\end{lemma}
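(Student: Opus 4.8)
The plan is to realize $\conjCa a$ as an oscillatory integral and extract the decay by a stationary-phase-type splitting around the critical point of the phase. Writing $\psi(z') := \Phi(z')+\overline{\Phi(z')} = 2\,\mathrm{Re}\,((z'-z_0)^2)$, which is real-valued with a single non-degenerate critical point at $z'=z_0$, we have
\[
\conjCa a(z) = \frac{1}{\pi}\int_X \frac{e^{-i\tau\psi(z')}}{z-z'}\,a(z')\,dm(z').
\]
By density it suffices to prove the estimate for $a\in C^\infty_0(X)$ and then pass to the limit in $W^{1,p*}_0(X)$. The two basic facts I will lean on are: (i) since $p*>2$, the Cauchy transform $\Ca$ maps $L^{p*}(X)\to L^\infty(X)$ boundedly, because the kernel $1/\abs{z-\cdot}$ lies in $L^{(p*)'}(X)$ uniformly in $z$ (here $(p*)'<2$); and (ii) the Morrey embedding $W^{1,p*}(X)\hookrightarrow L^\infty(X)$, again valid for $p*>2$.

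Fix a parameter $\rho\in(0,1]$ and a cut-off $\eta_\rho$ equal to $1$ on $\{\abs{z'-z_0}<\rho\}$, supported in $\{\abs{z'-z_0}<2\rho\}$, with $\abs{\db\eta_\rho}\lesssim\rho^{-1}$. I split $\conjCa a = \Ca(e^{-i\tau\psi}\eta_\rho a) + \Ca(e^{-i\tau\psi}(1-\eta_\rho)a) =: I+II$. For the inner piece $I$ there is no oscillatory gain, so I simply invoke (i) and bound the $L^{p*}$-norm of $\eta_\rho a$ by the measure of its support, giving $\norm{I}_\infty\lesssim\rho^{2/p*}\norm{a}_\infty\lesssim\rho^{2/p*}\norm{a}_{W^{1,p*}}$ by (ii). The smallness of this factor in $\rho$ is exactly what localizing near the critical point buys us.

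For the outer piece $II$, where $\abs{\db\psi}=2\abs{z'-z_0}\geq 2\rho>0$, I integrate by parts against the oscillation. Using $\db_{z'}e^{-i\tau\psi}=-2i\tau(\overline{z'}-\overline{z_0})e^{-i\tau\psi}$ I write $e^{-i\tau\psi}=\big(-2i\tau(\overline{z'}-\overline{z_0})\big)^{-1}\db_{z'}e^{-i\tau\psi}$ on the support of $1-\eta_\rho$, so that with $b:=\frac{(1-\eta_\rho)a}{-2i\tau(\overline{z'}-\overline{z_0})}$ one has $II=\Ca(b\,\db_{z'}e^{-i\tau\psi})=\Ca(\db(be^{-i\tau\psi}))-\Ca((\db b)e^{-i\tau\psi})$. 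Since $be^{-i\tau\psi}$ is compactly supported, $\Ca\db$ acts as the identity on it and the first term equals $be^{-i\tau\psi}$, of size $\lesssim\tau^{-1}\rho^{-1}\norm{a}_\infty$. For the second term I apply (i) and estimate $\norm{\db b}_{L^{p*}}$: expanding $\db b$ yields the pieces $\frac{(\db\eta_\rho)a}{\overline{z'}-\overline{z_0}}$, $\frac{(1-\eta_\rho)\db a}{\overline{z'}-\overline{z_0}}$ and $\frac{(1-\eta_\rho)a}{(\overline{z'}-\overline{z_0})^2}$ (all carrying $\tau^{-1}$), and a short computation on dyadic annuli $\abs{z'-z_0}\sim r\geq\rho$ gives $\norm{\db b}_{L^{p*}}\lesssim\tau^{-1}\rho^{2/p*-2}\norm{a}_{W^{1,p*}}$ (using that $2/p*-2<-1$, so the weight pieces dominate the cut-off piece). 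Altogether $\norm{II}_\infty\lesssim\tau^{-1}\rho^{2/p*-2}\norm{a}_{W^{1,p*}}$.

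Combining the two pieces yields $\norm{\conjCa a}_\infty\lesssim(\rho^{2/p*}+\tau^{-1}\rho^{2/p*-2})\norm{a}_{W^{1,p*}}$, and choosing $\rho=\tau^{-1/2}$ — the natural stationary-phase scale — balances the two terms and produces the claimed bound $\tau^{-1/p*}\norm{a}_{W^{1,p*}}$. The step I expect to require the most care is the integration by parts for $II$: the derivative falling on the singular weight $1/(\overline{z'}-\overline{z_0})$ produces $1/(\overline{z'}-\overline{z_0})^2$, whose $L^{p*}$-norm over $\{\abs{z'-z_0}\geq\rho\}$ is finite \emph{only} because $p*>2$, and it is precisely this borderline term that fixes the exponent $2/p*-2$ and hence the final rate. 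One must also check that $1-\eta_\rho$ vanishes near $z_0$, so that no distributional $\delta$ at $z_0$ enters when differentiating the weight, and separately justify $\Ca\db=\mathrm{id}$ on compactly supported functions together with the limiting argument from $C^\infty_0(X)$.
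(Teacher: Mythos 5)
Your proof is correct and follows essentially the same route as the paper's: the localization at scale $\tau^{-1/2}$ around $z_0$, the integration by parts against $\db e^{-i\tau(\Phi+\overline\Phi)}=-2i\tau(\overline z-\overline{z_0})e^{-i\tau(\Phi+\overline\Phi)}$ using $\Ca\db=\mathrm{id}$ on compactly supported distributions, and the mapping property $\Ca\colon L^{p*}\to L^\infty$ are precisely the content of Lemmas \ref{intByPartsLemma}, \ref{bumpNorm}, \ref{hNorm} and \ref{cauchyNorm}. The only cosmetic difference is that you keep the cut-off scale $\rho$ free and optimize to $\rho=\tau^{-1/2}$ at the end, whereas the paper fixes that scale from the outset.
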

  
\begin{proof}
  Let $\psi \in C^\infty_0(\R^2)$ be a test function supported in
  $B(\bar0,2)$ with $0\leq\psi\leq1$ and $\psi \equiv 1$ in
  $B(\bar0,1)$. Write $\psi_\tau(z) = \psi( \tau^{1/2}(z-z_0) )$. Let
  $h(z) = (1 - \psi_\tau(z)) / (\overline{z} - \overline{z_0})$. By
  integration by parts (Lemma \ref{intByPartsLemma}) we have
  \begin{align*}
    &\Ca(e^{-i\tau(\Phi + \overline{\Phi})}a) = \Ca\big(e^{-i\tau(\Phi
      + \overline{\Phi})} \psi_\tau a\big) \\ &\qquad\quad -
    \frac{1}{2i\tau} \big( e^{-i\tau(\Phi + \overline{\Phi})} h a -
    \Ca(e^{-i\tau(\Phi + \overline{\Phi})} \db h a) - \Ca(
    e^{-i\tau(\Phi + \overline{\Phi})} h \db a) \big).
  \end{align*}
  Then recall that by Lemma \ref{cauchyNorm} we have $\Ca \colon
  L^{p*}(X) \to W^{1,{p*}}(X)$, the latter of which is embedded into
  $L^\infty(X)$ since ${p*}>2$. Hence, by taking the
  $L^\infty(X)$-norm we have
  \[
  \norm{ \conjCa a}_\infty \leq C\big( \norm{ \psi_\tau a}_{p*} +
  \tau^{-1}( \norm{ h a }_\infty + \norm{ \db h a }_{p*} + \norm{ h
    \db a }_{p*})\big).
  \]
  The claim follows from H\"older's inequality and lemmas
  \ref{bumpNorm} and \ref{hNorm} after estimating
  \[
  \norm{ \conjCa a}_\infty \leq C \big( \norm{\psi_\tau}_{p*} +
  \tau^{-1}( \norm{h}_\infty + \norm{ \db h}_{p*}) \big) (
  \norm{a}_\infty + \norm{\db a}_{p*})
  \]
  and noting that $\tau^{-1/2} \leq \tau^{-1/{p*}}$ since $\tau>1$.
\end{proof}

\begin{lemma} \label{fundLpEstim}
  Let the usual assumptions hold and $\conjCa$ be as in Definition
  \ref{Tdef}. Assume that $2<{p*}<\infty$ and $1/2 + 1/{p*} \geq 1/q >
  1/2$. Then we can extend $\conjCa$ to a mapping $W^{1,q}_0(X) \to
  L^{p*}(X)$ with norm
  \[
  \norm{\conjCa a}_{L^{p*}(X)} \leq C \tau^{1/q - 1 - 1/{p*}}
  \norm{a}_{W^{1,q}(X)}
  \]
  where $C = C(p,q,X)$.
\end{lemma}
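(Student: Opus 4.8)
The plan is to mimic the proof of Lemma~\ref{fundLinftyEstim} verbatim at the level of the integration-by-parts decomposition, and then to feed the four resulting pieces into the $L^{p*}(X)$-mapping property of $\Ca$ rather than its $L^\infty(X)$-mapping property. With $\psi_\tau(z)=\psi(\tau^{1/2}(z-z_0))$ and $h=(1-\psi_\tau)/(\overline z-\overline{z_0})$ as before, Lemma~\ref{intByPartsLemma} gives
\[
\conjCa a = \Ca\big(e^{-i\tau(\Phi+\overline\Phi)}\psi_\tau a\big)-\frac{1}{2i\tau}\Big(e^{-i\tau(\Phi+\overline\Phi)}h a-\Ca\big(e^{-i\tau(\Phi+\overline\Phi)}\db h\,a\big)-\Ca\big(e^{-i\tau(\Phi+\overline\Phi)}h\,\db a\big)\Big),
\]
and I would take $\norm{\cdot}_{L^{p*}(X)}$ and estimate the four terms one by one.

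The single new ingredient is the bound $\Ca\colon L^p(X)\to L^{p*}(X)$, which I obtain by composing Lemma~\ref{cauchyNorm} ($\Ca\colon L^p(X)\to W^{1,p}(X)$, admissible since $1<p<2$) with the Sobolev embedding $W^{1,p}(X)\hookrightarrow L^{p*}(X)$; the latter is exactly the relation $1/p=1/2+1/{p*}$. Since $1/q>1/2$ forces $q<2$, the same embedding applied to $a\in W^{1,q}_0(X)$ yields $a\in L^{\sigma}(X)$ with $1/\sigma=1/q-1/2$, and of course $\db a\in L^q(X)$, both with norms bounded by $\norm{a}_{W^{1,q}(X)}$. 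These are the only two integrability upgrades used.

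Each term is then closed by H\"older's inequality and the cut-off estimates of Lemmas~\ref{bumpNorm} and \ref{hNorm}. For the leading term I bound $\norm{\Ca(e^{-i\tau(\Phi+\overline\Phi)}\psi_\tau a)}_{p*}\leq C\norm{\psi_\tau a}_p\leq C\norm{\psi_\tau}_r\norm{a}_{\sigma}$ with $1/r=1/p-1/\sigma=1+1/{p*}-1/q$, and $\norm{\psi_\tau}_r\sim\tau^{-1/r}$ delivers the power $\tau^{1/q-1-1/{p*}}$. The algebraic term $\tau^{-1}e^{-i\tau(\Phi+\overline\Phi)}h a$ is estimated by $\tau^{-1}\norm{h}_s\norm{a}_{\sigma}$ with $1/s=1/2+1/{p*}-1/q$, where the hypothesis $1/2+1/{p*}\geq1/q$ guarantees $1/s\geq0$ and $1/q>1/2>1/{p*}$ guarantees $s>2$, so that $\norm{h}_s\sim\tau^{1/2-1/s}$ and the total power is again $\tau^{-1/2-1/s}=\tau^{1/q-1-1/{p*}}$. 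The remaining two terms carry a factor $\tau^{-1}$ from the integration by parts together with $\Ca\colon L^p\to L^{p*}$; writing $\norm{\db h\,a}_p\leq\norm{\db h}_r\norm{a}_{\sigma}$ with $\norm{\db h}_r\sim\tau^{1-1/r}$, and $\norm{h\,\db a}_p\leq\norm{h}_s\norm{\db a}_q$ with $\norm{h}_s\sim\tau^{1/2-1/s}$, both again collapse to $\tau^{1/q-1-1/{p*}}$.

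I expect the only real work to be the exponent bookkeeping: verifying that the four distinct terms all produce precisely the exponent $1/q-1-1/{p*}$, and that the admissibility of every H\"older split is dictated exactly by the standing hypothesis $1/2+1/{p*}\geq1/q>1/2$ --- the lower bound supplying $q<2$ for the Sobolev embedding of $a$, and the upper bound supplying $1/s\geq0$ in the two $h$-terms. The upper endpoint $1/q=1/2+1/{p*}$ corresponds to $s=\infty$, where $\norm{h}_\infty\sim\tau^{1/2}$ recovers exactly the bound already used in Lemma~\ref{fundLinftyEstim}; since no term requires oscillatory cancellation beyond the single $\tau^{-1}$ gain extracted in the integration by parts, there is no genuinely new analytic obstacle compared with that lemma.
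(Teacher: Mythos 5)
Your proposal is correct and follows essentially the same route as the paper's own proof: the identical integration-by-parts decomposition from Lemma~\ref{intByPartsLemma}, the mapping $\Ca\colon L^p(X)\to L^{p*}(X)$ obtained from Lemma~\ref{cauchyNorm} plus Sobolev embedding, and the same H\"older splits (your exponents $r$ and $s$ are the paper's $r_1$ and $r_2$) closed by Lemmas~\ref{bumpNorm} and~\ref{hNorm}. The exponent bookkeeping you outline matches the paper's term-by-term estimates exactly.
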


\begin{proof}
  Let $\psi \in C^\infty_0(\R^2)$ be a test function supported in
  $B(0,2)$ with $0\leq\psi\leq1$ and $\psi \equiv 1$ in $B(0,1)$. For
  $\tau>0$ and $z_0\in\R^2$ write $\psi_\tau(z) = \psi(
  \tau^{1/2}(z-z_0) )$. Let $h(z) = (1 - \psi_\tau(z)) / (\overline{z}
  - \overline{z_0})$. Integration by parts gives
  \begin{align*}
    &\Ca(e^{-i\tau(\Phi + \overline{\Phi})}a) = \Ca\big(e^{-i\tau(\Phi
      + \overline{\Phi})} \psi_\tau a\big) \\ &\qquad\quad -
    \frac{1}{2i\tau} \big( e^{-i\tau(\Phi + \overline{\Phi})} h a -
    \Ca(e^{-i\tau(\Phi + \overline{\Phi})} \db h a) - \Ca(
    e^{-i\tau(\Phi + \overline{\Phi})} h \db a) \big)
  \end{align*}
  by Lemma \ref{intByPartsLemma}.

  Sobolev embedding and Lemma \ref{cauchyNorm} imply that $\Ca :
  L^p(X) \to L^{p*}(X)$. Taking the $L^{p*}(X)$-norm gives
  \[
  \norm{ \conjCa a}_{p*} \leq C \big( \norm{ \psi_\tau a}_p +
  \tau^{-1}( \norm{ h a }_{p*} + \norm{ \db h a }_p + \norm{ h \db a
  }_p) \big).
  \]
  Again, recall that $W^{1,q}(X) \hookrightarrow L^{q*}(X)$ where
  $1/{q*} = 1/q - 1/2$. H\"older's inquality gives
  \begin{align*}
    &\norm{ \psi_\tau a}_p \leq \norm{\psi_\tau}_{r_1} \norm{a}_{q*},
    && \frac{1}{r_1} = \frac{1}{p} - \frac{1}{q*} = 1 + \frac{1}{p*} -
    \frac{1}{q},\\ &\norm{ h a }_{p*} \leq \norm{h}_{r_2}
    \norm{a}_{q*}, && \frac{1}{r_2} = \frac{1}{2} + \frac{1}{p*} -
    \frac{1}{q},\\ &\norm{ \db h a }_p \leq \norm{\db h}_{r_1}
    \norm{a}_{q*},\\ &\norm{ h \db a }_p \leq \norm{h}_{r_2} \norm{\db
      a}_q.
  \end{align*}
  Lemmas \ref{bumpNorm} and \ref{hNorm} then give
  \begin{align*}
    &\norm{ \psi_\tau a}_p \leq C \tau^{-1-1/{p*}+1/q} \norm{a}_{q*},
    \\ &\norm{ h a }_{p*} \leq C \tau^{-1/{p*}+1/q} \norm{a}_{q*},
    \\ &\norm{ \db h a }_p \leq C \tau^{-1/{p*}+1/q} \norm{a}_{q*}
    \\ &\norm{ h \db a }_p \leq C \tau^{-1/{p*}+1/q} \norm{\db a}_q,
  \end{align*}
  which implies the claim.
\end{proof}

\begin{proposition} \label{Sestimates}
  Let the usual assumptions hold and let $q_1,q_2 \in L^p(\Omega)$
  where $1 < p < 2$. Then we can extend $S_1$
  and $S_2$ from Definition~\ref{Sdef} to the following maps with
  corresponding norm estimates
  \begin{align*}
    S_j:L^\infty(X) \to L^{p*}(X), & \qquad \norm{S_j f}_{p*} \leq C
    \tau^{-1/2} \norm{f}_\infty,\\ S_j:L^\infty(X) \to L^\infty(X), &
    \qquad \norm{S_j f}_\infty \leq C \tau^{-\SinfinfExp}
    \norm{f}_\infty,
  \end{align*}
  where $C = C(p,\chi) \norm{q_j}_p$ and $0<\SinfinfExp<1/p$ with
  $\SinfinfExp=\SinfinfExp(p)$. If in addition $p > 4/3$ then we have
  the extension
  \[
  S_j:L^{p*}(X) \to L^{{p*}/2}(X), \qquad \norm{S_j f}_{{p*}/2} \leq C
  \tau^{-1/2} \norm{f}_{p*}.
  \]
\end{proposition}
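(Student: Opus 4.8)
The plan is to read each $S_j$ (Definition~\ref{Sdef}) as a composition of the fundamental operator $\conjCa$ (Definition~\ref{Tdef}) applied to the output of an inner conjugated Cauchy operator, and to extract all $\tau$-decay from the outer operator while using the inner one only for smoothing. Concentrating on $S_1$ (the operator $S_2$ is identical after complex conjugation, which interchanges $\d$ and $\db$ and flips the sign of the phase, leaving the fundamental estimates unchanged since the stationary point stays at $z_0$), I write
\[
S_1 f = -\tfrac14 \conjCa(\chi v), \qquad v = \Cab\big(e^{i\tau(\Phi+\overline\Phi)} q_1 f\big).
\]
Because $\abs{e^{i\tau(\Phi+\overline\Phi)}} = 1$, Lemma~\ref{cauchyNorm} gives $\Cab : L^s(X) \to W^{1,s}(X)$ with no $\tau$-dependence, so whenever $q_1 f \in L^s$ one has $\chi v \in W^{1,s}_0(X)$ with $\norm{\chi v}_{W^{1,s}} \leq C\norm{q_1 f}_s$. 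Thus the inner operator costs nothing in $\tau$, and each estimate will follow by feeding $\chi v$ into the fundamental Lemma~\ref{fundLpEstim} with suitable exponents.

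For the first estimate, $f \in L^\infty$ gives $q_1 f \in L^p$ with $\norm{q_1 f}_p \leq \norm{q_1}_p \norm{f}_\infty$, hence $\chi v \in W^{1,p}_0$. I would apply Lemma~\ref{fundLpEstim} with source exponent $q = p$ and target $L^{p*}$: the admissibility $1/2 + 1/{p*} \geq 1/q > 1/2$ holds with equality on the left (by the definition of ${p*}$) and strictly on the right (as $p<2$), giving rate $\tau^{1/p - 1 - 1/{p*}} = \tau^{-1/2}$. For the third estimate, $f \in L^{p*}$ gives $q_1 f \in L^r$ with $1/r = 1/p + 1/{p*} = 1/2 + 2/{p*}$, and here $r \geq 1$ holds precisely when $p \geq 4/3$ — this is the single place the hypothesis $p>4/3$ is used. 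Applying Lemma~\ref{fundLpEstim} with $q = r$ and target $L^{{p*}/2}$ (legitimate since ${p*}/2 > 2 \Leftrightarrow p > 4/3$, again with admissibility holding at equality) yields rate $\tau^{1/r - 1 - 2/{p*}} = \tau^{-1/2}$.

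The delicate case is $S_1 : L^\infty \to L^\infty$. The main obstacle is that the inner operator only produces $W^{1,p}$-regularity with $p<2$, below the $W^{1,{p*}}_0$ threshold (${p*}>2$) needed to invoke Lemma~\ref{fundLinftyEstim} for an $L^\infty$ target; indeed a direct integration by parts against the phase generates a diagonal term of the form $\tau^{-1}e^{-i\tau(\Phi+\overline\Phi)}h\chi v$ that fails to lie in $L^\infty$ since $v \in L^{p*}\setminus L^\infty$. I would therefore bypass $L^\infty$ fundamental estimates entirely and interpolate. Writing $u = S_1 f = \Ca(e^{-i\tau(\Phi+\overline\Phi)} \chi v)$, the mapping property $\Ca : L^{p*} \to W^{1,{p*}}$ from Lemma~\ref{cauchyNorm} furnishes a gradient bound carrying no decay,
\[
\norm{\nabla u}_{p*} \leq \norm{u}_{W^{1,{p*}}} \leq C \norm{\chi v}_{p*} \leq C \norm{q_1}_p \norm{f}_\infty,
\]
whereas the first estimate already supplies the decaying bound $\norm{u}_{p*} \leq C \tau^{-1/2} \norm{q_1}_p \norm{f}_\infty$. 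The two-dimensional Gagliardo--Nirenberg inequality $\norm{u}_\infty \leq C \norm{\nabla u}_{p*}^{2/{p*}} \norm{u}_{p*}^{1 - 2/{p*}}$ (valid for ${p*}>2$) then combines these into
\[
\norm{u}_\infty \leq C \tau^{-\frac12(1 - 2/{p*})} \norm{q_1}_p \norm{f}_\infty = C \tau^{-(1 - 1/p)} \norm{q_1}_p \norm{f}_\infty,
\]
which is linear in $\norm{q_1}_p$ with rate $\SinfinfExp = 1 - 1/p$, lying in $(0,1/p)$ exactly because $1 < p < 2$. The steps I expect to need the most care are the interpolation on the bounded domain $X$ (the additive lower-order Gagliardo--Nirenberg term is controlled by $\norm{u}_{p*}\le C\tau^{-1/2}\norm{q_1}_p\norm{f}_\infty$, whose rate only improves on $\tau^{-(1-1/p)}$ and is thus absorbed) and the verification that the admissibility equalities in Lemma~\ref{fundLpEstim} are genuinely met at the two endpoints used for the first and third estimates.
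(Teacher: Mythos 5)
Your argument is correct, and for the first and third estimates it coincides with the paper's proof: both write $-4S_1f=\conjCa(\chi\Cab(e^{i\tau(\Phi+\overline\Phi)}q_1f))$, use Lemma~\ref{cauchyNorm} on the inner Cauchy operator at no cost in $\tau$, and feed the result into Lemma~\ref{fundLpEstim} with the same exponent choices ($q=p$, target $L^{p*}$; and $1/q=1/p+1/{p*}$, target $L^{{p*}/2}$, which is where $p>4/3$ enters twice, exactly as you note). Where you genuinely diverge is the $L^\infty\to L^\infty$ bound. The paper interpolates in the integrability of the \emph{potential}: it proves $\norm{S_1f}_\infty\leq C\tau^{-1/Q}\norm{q_1}_Q\norm{f}_\infty$ for $Q>2$ via Lemma~\ref{fundLinftyEstim}, a non-decaying bound $\norm{S_1f}_\infty\leq C\norm{q_1}_q\norm{f}_\infty$ for $1<q<p$ via Sobolev embedding, and then interpolates the linear map $q_1\mapsto S_1f$ between $L^q$ and $L^Q$ to land at $L^p$, obtaining an unspecified $\SinfinfExp>0$. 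You instead interpolate in the \emph{solution}: Gagliardo--Nirenberg between the non-decaying gradient bound $\norm{\nabla S_1f}_{p*}\leq C\norm{q_1}_p\norm{f}_\infty$ (from Lemma~\ref{cauchyNorm} alone) and the decaying bound $\norm{S_1f}_{p*}\leq C\tau^{-1/2}\norm{q_1}_p\norm{f}_\infty$ already established. Your route buys an explicit exponent $\SinfinfExp=1-1/p$, which is in fact the supremum of what the paper's interpolation scheme yields as $q\to1$, $Q\to2$, and it avoids Lemma~\ref{fundLinftyEstim} altogether; the paper's route avoids any appeal to Gagliardo--Nirenberg on the bounded domain $X$ (for your version one should note that $S_1f=\Ca w$ with $w$ compactly supported is defined and controlled in $W^{1,p*}$ on a ball containing $X$, so the interpolation inequality and its harmless additive lower-order term are available, as you observe). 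Both arguments are valid and give the stated proposition; your exponent verification $0<1-1/p<1/p$ for $1<p<2$ is correct.
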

\begin{proof}
  We shall prove the claim for $j=1$. The other case follows
  similarly. Using the notation from Lemma \ref{fundLpEstim} we can
  write $-4 S_1 f = \conjCa( \chi \Cab(e^{i\tau(\Phi+\overline\Phi)}
  q_1 f ) )$. The lemma combined with Lemma \ref{cauchyNorm} gives us
  \[
  \norm{S_1 f}_{q*} \leq C \tau^{-1/2} \norm{\chi
    \Cab(e^{i\tau(\Phi+\overline\Phi)} q_1 f )}_{W^{1,q}} \leq C
  \tau^{-1/2} \norm{q_1 f}_q
  \]
  whenever $2<q*<\infty$ and $1/q = 1/2 + 1/{q*}$. For the first
  estimate choose $q=p$, ${q*}={p*}$, and for the third one
  $1/q=1/p+1/{p*}$, ${q*}={p*}/2$. H\"older's inequality implies the
  rest.

  The second claim follows by interpolation. Let $2<Q<\infty$ and
  $1<q<p$. If $q_1 \in L^Q(X)$ then by Lemma \ref{fundLinftyEstim}
  \[
  \norm{S_1 f}_\infty \leq C \tau^{-1/Q} \norm{\chi
    \Cab(e^{i\tau(\Phi+\overline\Phi)} q_1 f )}_{W^{1,Q}}
  \]
  and Lemma \ref{cauchyNorm} gives the bound $C \tau^{-1/Q}
  \norm{q_1}_Q \norm{f}_\infty$· The latter lemma and Sobolev
  embedding imply that $\Cab : L^q \to L^{q*}$ and $\Ca : L^{q*} \to
  L^\infty$. Hence $\norm{ S_1 f }_\infty
  \leq C \norm{q_1}_q \norm{f}_\infty$.  Since $q<p<Q$ and $1/Q>0$
  interpolation gives us the second estimate with some
  $\SinfinfExp>0$.
\end{proof}

\begin{lemma} \label{varphiEstimates}
  Let the usual assumptions hold and $q_j \in L^p(\Omega)$ with
  $1<p<2$. Then $\varphi_j$, the function of $z\in X$ given by
  Definition \ref{varphiDef}, is in $L^\infty(X)$ with norm
  \[
  \norm{\varphi_j}_\infty \leq C \tau^{-\SinfinfExp}
  \]
  for $\SinfinfExp>0$ as in Proposition \ref{Sestimates}, and is in
  $L^{p*}(X)$ satisfying
  \[
  \norm{\varphi_j}_{p*} \leq C \tau^{-1/2},
  \]
where $C=C(p,\chi)(\norm{q_j}_p+\abs{\beta_j(z_0)})$.
\end{lemma}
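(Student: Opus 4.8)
The plan is to reduce both estimates to results already proved for the fundamental operator $\conjCa$ and for $S_j$, by exploiting an algebraic splitting of $\varphi_j$. I shall carry out the case $j=1$; the case $j=2$ is identical after interchanging the roles of $\Ca$ and $\Cab$. First I would observe that, since $\beta_1 = \beta_1(z_0)$ is constant in $z$, Definition \ref{varphiDef} gives $\varphi_1 = \frac{1}{4}\beta_1\conjCa(\chi) - \frac{1}{4}\conjCa(\chi\Cab q_1)$. The key point is that the second summand is exactly $S_1$ applied to the zeroth term $F_{1,0} = e^{-i\tau(\Phi+\overline\Phi)}$ of the series in Definition \ref{fDef}: indeed $e^{i\tau(\Phi+\overline\Phi)} q_1 F_{1,0} = q_1$, so by Definition \ref{Sdef} we have $S_1 F_{1,0} = -\frac{1}{4}\conjCa(\chi\Cab q_1)$. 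Hence
\[
\varphi_1 = \frac{1}{4}\beta_1\conjCa(\chi) + S_1 F_{1,0},
\]
and it is enough to bound each summand in $L^{p*}(X)$ and in $L^\infty(X)$.

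For the $S_1$-summand I would simply invoke Proposition \ref{Sestimates} with $f = F_{1,0}$, noting that $\norm{F_{1,0}}_\infty = 1$ because $\Phi + \overline\Phi$ is real-valued; this yields $\norm{S_1 F_{1,0}}_{p*} \leq C\tau^{-1/2}$ and $\norm{S_1 F_{1,0}}_\infty \leq C\tau^{-\SinfinfExp}$ with $C = C(p,\chi)\norm{q_1}_p$ and the very same exponent $\SinfinfExp$. For the smooth summand $\frac14\beta_1\conjCa(\chi)$ I would apply the fundamental estimates directly, since $\chi \in C^\infty_0(X)$ lies in every $W^{1,r}_0(X)$. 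Lemma \ref{fundLpEstim} with $q=p$ applies (its hypothesis reads $\frac12 + \frac1{p*} \geq \frac1p > \frac12$, which holds by the definition of $p*$ and $p<2$) and produces the exponent $\frac1p - 1 - \frac1{p*} = -\frac12$, giving $\norm{\conjCa\chi}_{p*} \leq C\tau^{-1/2}\norm{\chi}_{W^{1,p}}$. For the sup-norm I would use Lemma \ref{fundLinftyEstim} with Sobolev exponent $1/\SinfinfExp$; this is admissible because the interpolation construction of $\SinfinfExp$ in Proposition \ref{Sestimates} against the decay $\tau^{-1/Q}$ with $Q>2$ forces $\SinfinfExp < 1/Q < 1/2$, so that $1/\SinfinfExp > 2$, and it yields $\norm{\conjCa\chi}_\infty \leq C\tau^{-\SinfinfExp}\norm{\chi}_{W^{1,1/\SinfinfExp}}$. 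Summing the two contributions and absorbing $\abs{\beta_1}$ and the $\chi$-norms into the constant gives the claimed bounds with $C = C(p,\chi)(\norm{q_1}_p + \abs{\beta_1(z_0)})$.

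The only real obstacle is the sup-norm estimate of $\conjCa(\chi\Cab q_1)$, and it is worth isolating why the splitting above is the right move. One cannot feed $\chi\Cab q_1$ into Lemma \ref{fundLinftyEstim} directly, because that lemma needs a $W^{1,p*}$ operand whereas $\chi\Cab q_1$ belongs only to $W^{1,p}$: its $\d$-derivative equals $\chi q_1$ up to a more regular term, and $q_1$ lies only in $L^p$, short of the integrability required for the embedding $W^{1,p*}\hookrightarrow L^\infty$. This is precisely the difficulty already resolved in the sup-norm half of Proposition \ref{Sestimates} by interpolating between high ($q_1\in L^Q$, $Q>2$) and low ($q_1\in L^q$, $q<p$) integrability of the potential; recognizing this summand as $S_1 F_{1,0}$ lets me import that resolution verbatim instead of repeating the interpolation. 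Everything else is a routine application of Lemma \ref{cauchyNorm}, the Sobolev embedding, and H\"older's inequality.
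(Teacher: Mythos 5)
Your proposal is correct and follows essentially the same route as the paper: the authors' proof is precisely the observation that $\varphi_1 = \frac{1}{4}\beta_1(z_0)\,\conjCa(\chi) + S_1(e^{-i\tau(\Phi+\overline\Phi)})$ followed by an appeal to Proposition \ref{Sestimates} and Lemmas \ref{fundLinftyEstim} and \ref{fundLpEstim}. You have merely supplied the exponent bookkeeping (including the correct observation that $\SinfinfExp<1/2$, so Lemma \ref{fundLinftyEstim} is applicable to the $\conjCa(\chi)$ term) that the paper leaves implicit.
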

\begin{proof}
  Note that $\varphi_1 = \frac{1}{4}\beta_1(z_0) \Ca\big(
  e^{-i\tau(\Phi+\overline\Phi)} \chi\big) +
  S_1(e^{-i\tau(\Phi+\overline\Phi)})$ and use Proposition
  \ref{Sestimates} and lemmas \ref{fundLinftyEstim} and
  \ref{fundLpEstim}.
\end{proof}

\bigskip
\begin{proof}[Proof of Theorem \ref{CGOexistence}]
  By Proposition \ref{Sestimates} and Lemma \ref{varphiEstimates} we
  have
  \[
  \norm{S_j f}_\infty \leq C(p,\chi) \norm{q_j}_p \tau^{-\alpha}
  \norm{f}_\infty, \quad \norm{\varphi_j}_\infty \leq
  C(p,\chi)(\norm{q_j}_p + \abs{\beta_j(z_0)}) \tau^{-\alpha}
  \]
  where these norms are over the variable $z\in X$. Hence we get
  $\norm{F_{j,m}}_\infty \leq C^m \tau^{-m\SinfinfExp}$ for some
  $C=C(p,\chi,q_j,\sup_{z_0}\abs{\beta_j(z_0)})$ when $m\geq0$. If
  $\tau > C^{1/\SinfinfExp}$ then the series for $f_j$ converges in
  $L^\infty(X)$.

  The following observations, which are each easy to check in
  $\mathscr D'(X)$, imply that $D_j f_j = q_j f_j$ in $\Omega$. Note
  that $\beta_j$ are functions of the parameter $z_0$ but constant in
  the variable $z$. Recall the definitions \ref{dDef}, \ref{Sdef} and
  \ref{varphiDef} of $D_j$, $S_j$ and $\varphi_j$. Then
  \begin{itemize}
    \item $f_j = e^{-i\tau(\Phi+\overline\Phi)} + \varphi_j + S_j(f_j
      - e^{-i\tau(\Phi+\overline\Phi)})$,
    \item $D_j(e^{-i\tau(\Phi+\overline\Phi)})=0$,
    \item $D_j S_j f = q_j f$ in $\Omega$,
    \item $D_j \varphi_j = q_j e^{-i\tau(\Phi+\overline\Phi)}$ in
      $\Omega$.
  \end{itemize}
  Lemma \ref{conjPDE} shows that we indeed get solutions to
  $(\Delta+q_j)u_j=0$.

  Recall that $1/p=1/2+1/{p*}$. Then by Lemma \ref{cauchyNorm} we have
  $\Ca, \Cab: L^{p*} \to W^{1,{p*}}$ which embeds to $W^{1,2}$ locally
  since ${p*}>2$. Moreover by Sovolev embedding we have $\Ca, \Cab :
  L^p \to L^{p*}$. Hence by the first item above $f_j \in W^{1,2}(X)$.
\end{proof}

\section{Proof of the main result} \label{alessandriniSection}

We will prove Theorem \ref{uniquenessThm} in this section. The proof
will be split into several lemmas. By Alessandrini's identity
\[
\int_\Omega (q_1-q_2) u_1 u_2 dx = 0
\]
for solutions $u_j$ to $(\Delta+q_j)u_j=0$ in $\Omega$. For any
parameter $z_0\in\C$ let $u_j$ be the complex geometrical optics
solution given by Theorem \ref{CGOexistence}. Recall that they are
defined in $X \supset \Omega$ but are solutions only in $\Omega$. Then
the product $u_1u_2$ will be a series of terms, and these will have to
be estimated carefully. Lemmas \ref{alessandriniTermByTerm} --
\ref{0orderTerms} deal with this. The main proof follows.

\begin{lemma} \label{alessandriniTermByTerm}
  Let the usual assumptions hold and $1<p<2$ with $q_1,q_2 \in
  L^p(\Omega)$. Let $f_1, f_2 \in L^\infty(X)$ be as in Theorem
  \ref{CGOexistence} and set $u_1 = e^{i\tau\Phi}f_1$ and $u_2 =
  e^{i\tau\overline\Phi}f_2$. Then
  \[
  \frac{2\tau}{\pi} \int (q_1-q_2) u_1 u_2 dx = \sum_{k+l=0}^\infty
  \frac{2\tau}{\pi} \int (q_1-q_2) e^{i\tau(\Phi+\overline\Phi)}
  F_{1,k} F_{2,l} dx
  \]
  where $k,l\geq0$ and the sum converges in the $L^\infty(X)$-norm
  with respect to $z_0$.
\end{lemma}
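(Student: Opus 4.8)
The plan is to expand the product $u_1 u_2$ into a double series using the series representations of $f_1$ and $f_2$, reorganize it as a Cauchy product indexed by $k+l=n$, and then exchange the summation with the integral against $q_1-q_2$. First I would write
\[
u_1 u_2 = e^{i\tau\Phi} f_1 \cdot e^{i\tau\overline\Phi} f_2 = e^{i\tau(\Phi+\overline\Phi)} f_1 f_2,
\]
and observe that $\Phi+\overline\Phi = 2\,\mathrm{Re}\,\Phi$ is real-valued, so $\abs{e^{i\tau(\Phi+\overline\Phi)}} = 1$ pointwise. Substituting $f_j = \sum_{m=0}^\infty F_{j,m}$ from Definition~\ref{fDef} then turns the integrand into a product of two convergent series.

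The analytic core is the rearrangement and its convergence. By Theorem~\ref{CGOexistence} we have the geometric bound $\norm{F_{j,m}}_\infty \leq (C\tau^{-\SinfinfExp})^m$ with $\SinfinfExp>0$, so for $\tau$ large enough that $C\tau^{-\SinfinfExp}<1$ both series converge absolutely in $L^\infty(X)$. Consequently the double family $\{F_{1,k}F_{2,l}\}_{k,l\geq0}$ is absolutely summable in $L^\infty(X)$, since
\[
\sum_{k,l\geq0} \norm{F_{1,k}F_{2,l}}_\infty \leq \Big( \sum_{k\geq0}(C\tau^{-\SinfinfExp})^k \Big)^2 < \infty,
\]
and therefore I may reorganize it into the diagonal Cauchy product $\sum_{n=0}^\infty \sum_{k+l=n} F_{1,k}F_{2,l}$, which equals $f_1 f_2$ with convergence in $L^\infty(X)$.

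Finally I would exchange summation and integration. Since $\Omega$ is bounded and $p>1$, we have $q_1-q_2 \in L^1(\Omega)$ (recalling the convention that $L^p(\Omega)$ functions are extended by zero), and the linear functional $g \mapsto \frac{2\tau}{\pi}\int (q_1-q_2)\,e^{i\tau(\Phi+\overline\Phi)} g\, dx$ is bounded on $L^\infty(X)$ by $\frac{2\tau}{\pi}\norm{q_1-q_2}_{L^1(\Omega)}\norm{g}_\infty$, because the exponential has modulus one and each $F_{1,k}F_{2,l}$ lies in $L^\infty(X)$. Applying this continuous functional to the $L^\infty$-convergent Cauchy product yields the asserted identity, with the resulting series of integrals converging. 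The point deserving emphasis is the uniformity in $z_0$: the constant $C$ in Theorem~\ref{CGOexistence} depends on $z_0$ only through $\sup_{z_0}\abs{\beta_j}$ and is otherwise $z_0$-independent, so the geometric bounds, hence the tail estimates, are uniform in $z_0$, which is precisely what upgrades pointwise convergence to convergence in the $L^\infty$-norm with respect to $z_0$.

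The main (though mild) obstacle is keeping the rearrangement of the double product series and the interchange with the integral simultaneously under control; the geometric decay from Theorem~\ref{CGOexistence} makes both steps routine, and the uniform-in-$z_0$ constant is the ingredient that secures $L^\infty$-convergence in the parameter $z_0$ rather than merely for each fixed $z_0$.
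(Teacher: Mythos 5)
Your argument is correct and follows essentially the same route as the paper's own (very terse) proof: bound the tail $\sum_{k+l\geq N}$ using the $z_0$-uniform geometric estimates $\norm{F_{j,m}}_\infty \leq (C\tau^{-\SinfinfExp})^m$ from Theorem~\ref{CGOexistence}, note $\abs{e^{i\tau(\Phi+\overline\Phi)}}=1$ and $q_1-q_2\in L^1(\Omega)$, and interchange sum and integral. Your write-up simply makes explicit the Cauchy-product rearrangement and the uniformity in $z_0$ that the paper leaves implicit.
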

\begin{proof}
  We can estimate
  \[
  \abs{\int \sum_{k+l=N}^\infty (q_1-q_2)
    e^{i\tau(\Phi+\overline\Phi)} F_{1,k} F_{2,l} dx } \leq \int
  \sum_{k+l=N}^\infty \abs{q_1-q_2} \abs{F_{1,k}} \abs{F_{2,l}} dx
  \]
  and use the $z_0$-independent estimates for $F_{j,m}$ from Theorem
  \ref{CGOexistence} to see that the remainder tends to zero as
  $N\to\infty$. Hence the sum can be taken out of the integral and the
  claim follows.
\end{proof}

\begin{lemma} \label{highOrderTerms}
  Let the usual assumptions hold and $q_1, q_2 \in L^p(\Omega)$ with
  $4/3<p<2$. For $j\in\{1,2\}$ and $m\in\N$ take $F_{j,m}$ as in
  Definition \ref{fDef}.

  Then if $q_1-q_2\in L^2(\Omega)$ we have
  \[
  \abs{ \tau \int (q_1-q_2) e^{i\tau(\Phi+\overline\Phi)} F_{1,k}
    F_{2,l} dx } \leq C^{k+l} \tau^{-(k+l-2)\SinfinfExp}.
  \]
  when $k+l\geq3$. Here
  $C=C(p,\chi,\norm{q_1}_p,\norm{q_2}_p,\norm{q_1-q_2}_2)
  (1+\abs{\beta_1(z_0)}+\abs{\beta_2(z_0)})$ and $\SinfinfExp>0$ is as
  in Proposition \ref{Sestimates}.
\end{lemma}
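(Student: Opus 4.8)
The plan is to bound the modulus of the integrand and apply H\"older's inequality, choosing the integrability exponents so that the $\tau^{-1/2}$ gains available from Proposition~\ref{Sestimates} overcome the prefactor $\tau$. Since $\Phi+\overline\Phi = 2\operatorname{Re}\Phi$ is real-valued we have $\abs{e^{i\tau(\Phi+\overline\Phi)}}=1$, so it suffices to control $\tau\int_\Omega \abs{q_1-q_2}\,\abs{F_{1,k}}\,\abs{F_{2,l}}\,dx$, using that $q_1-q_2$ is supported in $\Omega$.

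First I would iterate the estimates of Proposition~\ref{Sestimates} on $\varphi_j$ (Lemma~\ref{varphiEstimates}) to obtain three bounds for $F_{j,m}=S_j^{m-1}\varphi_j$. Applying $S_j\colon L^\infty\to L^\infty$ repeatedly gives $\norm{F_{j,m}}_\infty\leq C^m\tau^{-m\SinfinfExp}$ for $m\geq0$ (this is already recorded in Theorem~\ref{CGOexistence}). Applying $S_j\colon L^\infty\to L^{p*}$ to the $L^\infty$-bound of $F_{j,m-1}$, with $\varphi_j\in L^{p*}$ as the base case, gives $\norm{F_{j,m}}_{p*}\leq C^m\tau^{-1/2-(m-1)\SinfinfExp}$ for $m\geq1$. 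Finally, applying $S_j\colon L^{p*}\to L^{{p*}/2}$ --- the estimate that requires $p>4/3$ --- to the $L^{p*}$-bound of $F_{j,m-1}$ gives $\norm{F_{j,m}}_{{p*}/2}\leq C^m\tau^{-1-(m-2)\SinfinfExp}$ for $m\geq2$. In each case the constant has the stated dependence, with one factor of $(\norm{q_j}_p+\abs{\beta_j(z_0)})$ coming from $\varphi_j$.

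I would then split into cases according to which of $k,l$ vanishes; since $k+l\geq3$, at most one can. In the main case $k,l\geq1$ I pair $q_1-q_2\in L^2$ with $F_{1,k},F_{2,l}\in L^{p*}$. The admissibility of this triple H\"older estimate is precisely the condition $\tfrac12+\tfrac{2}{p*}\leq1$, i.e.\ $p\geq4/3$: writing $\tfrac1s=1-\tfrac{2}{p*}>\tfrac12$ we have $L^2(\Omega)\hookrightarrow L^s(\Omega)$, so
\[
\int_\Omega \abs{q_1-q_2}\,\abs{F_{1,k}}\,\abs{F_{2,l}}\,dx \leq C\norm{q_1-q_2}_2\,\norm{F_{1,k}}_{p*}\,\norm{F_{2,l}}_{p*} \leq C^{k+l}\tau^{-1-(k+l-2)\SinfinfExp},
\]
and multiplying by $\tau$ yields the claim. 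When $l=0$ (hence $k\geq3$; the case $k=0$ is symmetric) I use $\norm{F_{2,0}}_\infty=1$ and pair $q_1-q_2\in L^2$ with $F_{1,k}\in L^{{p*}/2}\hookrightarrow L^2$ (again $p\geq4/3$), so that $\tau\norm{q_1-q_2}_2\norm{F_{1,k}}_{{p*}/2}\leq C^{k}\tau^{-(k-2)\SinfinfExp}=C^{k+l}\tau^{-(k+l-2)\SinfinfExp}$.

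The main obstacle is the prefactor $\tau$: it forces the extraction of two full $\tau^{-1/2}$ gains rather than the weaker $\tau^{-\SinfinfExp}$ gains, and this is only affordable against $q_1-q_2\in L^2$ when the two high-order factors can both be placed in $L^{p*}$ --- exactly the inequality $2/p*\leq1/2$, which is where the hypothesis $p>4/3$ enters. The accompanying bookkeeping point is that the $L^{p*}$ and $L^{{p*}/2}$ iterates are valid only for $m\geq1$ and $m\geq2$ respectively, which is why the degenerate cases $k=0$ or $l=0$ (where the surviving index is automatically $\geq3$) must be treated separately via the embedding $L^{{p*}/2}\hookrightarrow L^2$.
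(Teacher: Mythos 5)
Your proof is correct and is essentially the paper's argument: both iterate Proposition \ref{Sestimates} and Lemma \ref{varphiEstimates} to obtain the same three recursive bounds on $\norm{F_{j,m}}_\infty$, $\norm{F_{j,m}}_{p*}$ and $\norm{F_{j,m}}_{{p*}/2}$, and then apply H\"older against $\norm{q_1-q_2}_2$, with $p>4/3$ (equivalently ${p*}\geq 4$) making the exponents admissible. The only difference is bookkeeping: the paper assumes $k\geq l$ (so $k\geq 2$ automatically) and uses the single asymmetric split $\norm{q_1-q_2}_2\norm{F_{1,k}}_{{p*}/2}\norm{F_{2,l}}_\infty$ in all cases, which avoids your two-case analysis, whereas your symmetric $(2,{p*},{p*})$ split handles the main case and relegates the $L^{p*}\to L^{{p*}/2}$ extension of $S_j$ to the cases where $k$ or $l$ vanishes.
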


\begin{proof}
  We may assume that $k\geq l$. By H\"older's inequality the integral
  can be estimated with
  \[
  C \tau \norm{q_1-q_2}_2 \norm{F_{1,k}}_{{p*}/2} \norm{F_{2,l}}_\infty
  \]
  because $p>4/3$ imples ${p*}>4$ for $1/p=1/2+1/{p*}$ and then $1/2 +
  2/{p*} \leq 1$. Proposition \ref{Sestimates} and Lemma
  \ref{varphiEstimates} imply the following estimates
  \begin{align*}
    &\norm{F_{j,0}}_\infty = 1,\quad \norm{F_{j,1}}_\infty \leq C
    \tau^{-\SinfinfExp},\quad \norm{F_{j,1}}_{p*} \leq C \tau^{-1/2},
    \\ &\norm{F_{j,m+1}}_\infty \leq C \tau^{-\SinfinfExp}
    \norm{F_{j,m}}_\infty, \\ &\norm{F_{j,m+1}}_{p*} \leq C
    \tau^{-1/2} \norm{F_{j,m}}_\infty, \\ &\norm{F_{j,m+1}}_{{p*}/2}
    \leq C \tau^{-1/2} \norm{F_{j,m}}_{p*}
  \end{align*}
  for $j\in\{1,2\}$ and $m=1,2,\ldots$. These imply
  \[
  \norm{F_{1,k}}_{{p*}/2} \leq C^k \tau^{-1-(k-2)\SinfinfExp},\qquad
  \norm{F_{2,l}}_\infty \leq C^l \tau^{-l\SinfinfExp}
  \]
  for $k\geq2$, $l\geq0$. The claim is direct consequence.
\end{proof}

From Lemma~\ref{highOrderTerms}, we can see that the higher order
terms decay in $\tau$ whenever $k+l\ge 3$. A more refined estimate
shows that the term of $k+l=2$ also decays.
\begin{lemma} \label{2orderTerms}
  Let the usual assumption hold and $q_1,q_2\in L^p(\Omega)$ with
  $4/3<p<2$. For $j\in\{1,2\}$ and $m\in\N$ let $F_{j,m}$ be as in
  Definition \ref{fDef}. Assume that $q_1-q_2\in L^2(\Omega)$. For
  $k+l=2$, we have
  \[
  \abs{ \tau \int (q_1-q_2) e^{i\tau(\Phi+\overline\Phi)} F_{1,k}
    F_{2,l} dx } \leq C \tau^{1/p-3/4},
  \]
  where the constant $C$ is of the form
  $C(p,\chi,\norm{q_1}_p,\norm{q_2}_p,\norm{q_1-q_2}_2)
  (1+\abs{\beta_1(z_0)}+\abs{\beta_2(z_0)})$.
\end{lemma}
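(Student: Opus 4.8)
The plan is to split into the three sub-cases $(k,l)\in\{(2,0),(1,1),(0,2)\}$, treating $(2,0)$ and $(0,2)$ symmetrically. The decisive point is that the fixed-norm bounds recorded in Proposition~\ref{Sestimates} and Lemma~\ref{varphiEstimates} place each factor in one of $L^\infty$, $L^{p*}$, $L^{{p*}/2}$ and hence only produce an $O(1)$ bound at order $k+l=2$; this is exactly the borderline, so the genuine decay must instead be squeezed out of the oscillation $e^{i\tau(\Phi+\overline\Phi)}$ through the \emph{full scale} of conjugated-operator estimates in Lemma~\ref{fundLpEstim}, whose output exponent I keep as a free parameter (I rename it $m$ to avoid clashing with ${p*}$). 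Throughout I use the hypothesis $q_1-q_2\in L^2(\Omega)$, and I write $\mu=e^{i\tau(\Phi+\overline\Phi)}$, so $\abs\mu=1$.

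For the balanced term $(1,1)$ I would leave the oscillation untouched. By Definition~\ref{varphiDef}, $\varphi_1=\tfrac14\conjCa(g_1)$ and $\varphi_2=\tfrac14\Cab(\overline\mu g_2)$ with $g_1=\chi(\beta_1-\Cab q_1)$ and $g_2=\chi(\beta_2-\Ca q_2)$, both in $W^{1,p}_0(X)$ since $\Ca,\Cab\colon L^p\to W^{1,p}$ by Lemma~\ref{cauchyNorm}. Applying Lemma~\ref{fundLpEstim} (and its evident $\Cab$-analogue, valid by the same proof) with input exponent $p$ and output exponent $4$ gives $\norm{\varphi_j}_4\le C\tau^{1/p-5/4}\norm{g_j}_{W^{1,p}}$; this is admissible because $p>4/3$ forces ${p*}>4$, hence $\tfrac12+\tfrac14\ge 1/p>\tfrac12$ and $4<{p*}$. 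Since $\abs\mu=1$, Hölder with $\tfrac12+\tfrac14+\tfrac14=1$ yields
\[
  \tau\int \abs{q_1-q_2}\,\abs{\varphi_1}\,\abs{\varphi_2}\,dx \le C\tau\norm{q_1-q_2}_2\norm{\varphi_1}_4\norm{\varphi_2}_4\le C\tau^{2/p-3/2},
\]
and $2/p-3/2\le 1/p-3/4$ precisely when $p\ge 4/3$, which is even stronger than the claim.

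For $(2,0)$ the factor $F_{2,0}=\overline\mu$ cancels the oscillation, leaving $\tau\int(q_1-q_2)\,S_1\varphi_1\,dx$. I unfold $S_1\varphi_1=-\tfrac14\conjCa\big(\chi\Cab(\mu\,q_1\varphi_1)\big)$ from Definition~\ref{Sdef}. For the inner factor I use $\Cab\colon L^r\to W^{1,r}$ with $1/r=1/p+1/{p*}$ together with $q_1\in L^p$ and $\norm{\varphi_1}_{p*}\le C\tau^{-1/2}$ (Lemma~\ref{varphiEstimates}), obtaining $\norm{\chi\Cab(\mu q_1\varphi_1)}_{W^{1,r}}\le C\tau^{-1/2}$. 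Feeding this into Lemma~\ref{fundLpEstim} with input exponent $r$ and output exponent $m$ fixed by $1/m=1/{p*}+1/4$ (admissible since then $2<m\le {p*}/2$, using ${p*}>4$) produces $\norm{S_1\varphi_1}_m\le C\tau^{2/{p*}-1-1/m}$. Pairing against $q_1-q_2\in L^2\hookrightarrow L^{m'}$ gives $\tau\cdot\tau^{2/{p*}-1-1/m}=C\tau^{1/{p*}-1/4}=C\tau^{1/p-3/4}$, as desired; the case $(0,2)$ is identical after interchanging $\d\leftrightarrow\db$ and $q_1\leftrightarrow q_2$. In each step the constants depend only on $p,\chi,\norm{q_1}_p,\norm{q_2}_p,\norm{q_1-q_2}_2$ and on $\abs{\beta_j(z_0)}$, as asserted.

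The main obstacle is conceptual rather than computational: at $k+l=2$ every product estimate assembled from fixed function spaces sits exactly at $\tau^0$, so one must recognise that the surviving decay lives entirely in the oscillatory factor and is visible only through the sharp range of Lemma~\ref{fundLpEstim}—pushing the output exponent down toward $2$ for the $S_1\varphi_1$ factor and balancing it at $4$ for the $\varphi_1\varphi_2$ factor. The delicate bookkeeping is to verify that these exponents meet the constraints $2<m\le {p*}$ (respectively $m\le {p*}/2$) of Lemma~\ref{fundLpEstim} across the \emph{entire} range $4/3<p<2$, and that the resulting exponent never exceeds $1/p-3/4$. It is exactly here, and only here, that the threshold $p>4/3$ is used essentially, the admissible window for $m$ collapsing as $p\downarrow 4/3$, in accordance with the fact that $\tau^{1/p-3/4}\to\tau^0$ there.
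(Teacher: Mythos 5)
Your argument is correct and follows essentially the same route as the paper: for $(k,l)=(1,1)$ you apply Lemma~\ref{fundLpEstim} to $\varphi_j$ with input $W^{1,p}$ and output $L^4$, and for $(k,l)=(2,0)$ you unfold $S_1\varphi_1$, bound the inner factor in $W^{1,r}$ with $1/r=1/p+1/{p*}$, and apply Lemma~\ref{fundLpEstim} with output exponent $1/m=1/p-1/4$, which is exactly the paper's choice of $r*$. The only cosmetic difference is that you fold the $\tau^{-1/2}$ from $\norm{\varphi_1}_{p*}$ into the intermediate exponent $2/{p*}-1-1/m$, and the final exponents and admissibility checks (in particular ${p*}>4$ from $p>4/3$) all match.
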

\begin{proof}
  We can assume that $k\geq l$. There are two cases: $k=2, l=0$ and
  $k=l=1$. Start with the first one. The integral with $F_{1,2}
  F_{2,0}$ is
  \[
  -\frac{1}{4} \tau \int (q_1-q_2) \Ca\big(
  e^{-i\tau(\Phi+\overline\Phi)} \chi \Cab(
  e^{i\tau(\Phi+\overline\Phi)} q_1 \varphi_1 ) \big) dx
  \]
  by Definition \ref{Sdef}. We have $q_1-q_2\in L^2(\Omega)$ and hence
  we should take the $L^{r*}(X)$-norm of the remaining factor for any
  ${r*}\geq2$.

  We note that $\varphi_1 \in L^{p*}(X)$ and $q_1\in L^p(X)$. Hence
  their product is in $L^q(X)$ with $1/q=1/p+1/{p*}=2/p-1/2$. Choose
  $1/{r*}=1/p-1/4$. Then $2<{r*}<\infty$ and $1/2+1/{r*}\geq1/q>1/2$
  since $4/3<p<2$. Hence by Lemma \ref{fundLpEstim}
  \[
  \norm{ \Ca\big( e^{-i\tau(\Phi+\overline\Phi)} \chi \Cab(
    e^{i\tau(\Phi+\overline\Phi)} q_1 \varphi_1) \big) }_{r*} \leq C
  \tau^{1/q-1-1/{r*}} \norm{q_1}_p \norm{\varphi_1}_{p*}
  \]
  and the exponential is $1/q-1-1/{r*} = 1/p-5/4$. Recall that
  $\norm{\varphi_1}_{p*} \leq C\tau^{-1/2}$ by Lemma
  \ref{varphiEstimates}. The claim for $k=2$, $l=0$ follows.

  In the case $k=l=1$ note that $\beta_1(z_0)-\Cab q_1 \in W^{1,p}(X)$
  by Lemma \ref{cauchyNorm}. Note that $4/3<p<2$ implies
  $1/2+1/4\geq1/p>1/2$. Then by Lemma \ref{fundLpEstim}
  \[
  \norm{\varphi_1}_4 \leq C \tau^{1/p-1-1/4} \norm{\beta_1(z_0)-\Cab
    q_1}_{W^{1,p}} \leq C \tau^{1/p-5/4} (\abs{\beta_1(z_0)} +
  \norm{q_1}_p).
  \]
  When $k=l=1$, the absolute value of the integral in the lemma
  statement becomes
  \[
  \abs{ \tau \int (q_1-q_2) e^{i\tau(\Phi+\overline\Phi)} \varphi_1
    \varphi_2 dx } \leq \norm{q_1-q_2}_2 \tau
  \norm{\varphi_1}_4 \norm{\varphi_2}_4
  \]
  by H\"older's inequality. The claim follows since $\tau^{2/p-6/4} <
  \tau^{1/p-3/4}$ when $\tau>1$ and $p>4/3$.
\end{proof}

\bigskip
We recall the method of stationary phase and its convergence in the
$L^2$-sense before proceeding to deal with terms of order one and zero
in the Alessandrini identity.

\begin{lemma} \label{statPhase}
  For $z_0\in\C$, $\Phi(z) = (z-z_0)^2$ and $\tau\in\R$ define the
  operator
  \[
  Ef(z_0) = \frac{2\tau}{\pi} \int e^{-i\tau(\Phi+\overline\Phi)} f(z)
  dm(z)
  \]
  for $f \in C^\infty_0(\C)$. Here $dm(z)$ is the two-dimensional
  Lebesgue measure in $\C$.

  Then $E$ can be extended to a unitary operator on $L^2(\C)$ such
  that
  \[
  \lim_{\tau\to\pm\infty} \norm{Ef-f}_2 = 0.
  \]
\end{lemma}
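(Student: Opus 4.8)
The plan is to diagonalize $E$ with the Fourier transform in the variable $z_0$. First I make the phase explicit: writing $z=x+iy$, $z_0=a+ib$ and $(u,v)=(x-a,y-b)$, we get $\Phi+\overline\Phi=(z-z_0)^2+\overline{(z-z_0)}^{\,2}=2\,\mathrm{Re}\,(z-z_0)^2=2(u^2-v^2)$, so the phase is real and
\[
e^{-i\tau(\Phi+\overline\Phi)}=e^{-2i\tau(x-a)^2}\,e^{\,2i\tau(y-b)^2}.
\]
Hence $E$ is translation invariant in $z_0$: with the tensor-product chirp $\kappa(s,t)=e^{-2i\tau s^2}e^{2i\tau t^2}$ (even in each argument) we have $Ef=\tfrac{2\tau}{\pi}\,\kappa*f$. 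Since $\kappa\in L^\infty(\R^2)$, this convolution is well defined for $f\in C^\infty_0(\C)$, and $E$ is the Fourier multiplier whose symbol is $m_\tau=\tfrac{2\tau}{\pi}\widehat\kappa$; everything thus reduces to computing $\widehat\kappa$.

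The key computation is the one-dimensional Fresnel transform of a chirp. Using $\int_\R e^{ics^2-is\xi}\,ds=\sqrt{\pi/\abs{c}}\;e^{i\frac{\pi}{4}\sgn c}\,e^{-i\xi^2/(4c)}$ with $c=-2\tau$ and $c=2\tau$ for the two factors, the amplitudes multiply to $\pi/(2\abs{\tau})$, the signature phases $e^{\pm i\frac{\pi}{4}\sgn\tau}$ cancel, and the quadratic phases combine, giving
\[
\widehat\kappa(\xi_1,\xi_2)=\frac{\pi}{2\abs{\tau}}\,e^{\,i(\xi_1^2-\xi_2^2)/(8\tau)}.
\]
Therefore $m_\tau(\xi_1,\xi_2)=\sgn(\tau)\,e^{\,i(\xi_1^2-\xi_2^2)/(8\tau)}$, of modulus one. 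Since $\widehat{Ef}=m_\tau\widehat f$ and $\abs{m_\tau}\equiv1$, Plancherel gives $Ef\in L^2$ with $\norm{Ef}_2=\norm{f}_2$ for $f\in C^\infty_0(\C)$; hence $E$ extends to an isometry of $L^2(\C)$, and it is onto — thus unitary — since the multiplier $\overline{m_\tau}$ furnishes a two-sided inverse.

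For the convergence I argue by dominated convergence on the Fourier side. As $\tau\to+\infty$ (the regime $\tau>1$ of the standing assumptions) the quadratic phase tends to $1$ and $\sgn\tau=1$, so $m_\tau(\xi)\to1$ pointwise while $\abs{m_\tau-1}\le2$. For every $f\in L^2(\C)$ we have $\widehat f\in L^2$, so Plancherel gives
\[
\norm{Ef-f}_2^2=(2\pi)^{-2}\int_{\R^2}\abs{m_\tau(\xi)-1}^2\,\abs{\widehat f(\xi)}^2\,dm(\xi),
\]
and dominated convergence with majorant $4\abs{\widehat f}^2$ forces this to $0$.

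The one genuinely delicate point is that the chirp integrals are only conditionally convergent oscillatory integrals, so the Fresnel identity and the factorized symbol above need rigorous justification. I would insert a Gaussian regularization, replacing $e^{\mp2i\tau s^2}$ by $e^{-(\epsilon\pm2i\tau)s^2}$ with $\epsilon>0$, evaluate the now absolutely convergent Gaussian Fourier transforms using the principal branch of the square root, and pass to the limit $\epsilon\to0^+$ in the sense of tempered distributions (equivalently, pair everything with $\widehat f$ for $f$ Schwartz and use dominated convergence in $\epsilon$). The remaining work is bookkeeping of constants: the normalization $\tfrac{2\tau}{\pi}$ cancels the amplitude $\tfrac{\pi}{2\abs{\tau}}$ up to the factor $\sgn\tau$, which equals $+1$ under the standing assumption $\tau>1$, and this is exactly what yields a \emph{unit-modulus} symbol together with the identity in the limit.
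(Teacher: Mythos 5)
Your proof is correct and takes essentially the same route as the paper's: both recognize $E$ as convolution with a chirp, hence a Fourier multiplier with unimodular symbol $e^{i(\xi_1^2-\xi_2^2)/(8\tau)}$, and deduce unitarity from Plancherel and the limit from dominated convergence on the Fourier side; the only difference is that you compute the Fresnel transform explicitly (with the Gaussian regularization) where the paper simply cites the formula. Your factor $\sgn\tau$ is a genuine observation --- it shows that for $\tau\to-\infty$ one actually gets $Ef\to-f$ --- but since only $\tau\to+\infty$ is ever used, this is harmless.
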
  
\begin{proof}
  Consider the function $z \mapsto 2\tau \exp(-i(z^2+\overline
  z^2))/\pi$ defined on $\C\equiv\R^2$. Its Fourier transform is
  $\exp(i(\xi^2+\overline\xi^2)/(16\tau))$ by for example
  \cite{Blasten--Imanuvilov--Yamamoto}. We have $Ef =
  \frac{2\tau}{\pi} e^{-i(z^2+\overline z^2)} \ast f$ and hence
  $\mathscr F\left\{Ef\right\}(\xi) =
  e^{i\frac{\xi^2+\overline\xi^2}{16\tau}} \hat f(\xi)$. Parseval's
  theorem implies the unitary extension to $L^2(\C)$. When
  $\tau\to\pm\infty$ the exponential tends to $1$ pointwise. Dominated
  convergence and Parseval's theorem imply the second claim.
\end{proof}

The following way of dealing with the first order terms comes from
\cite{imanuvilovYamamotoLp, Blasten--Imanuvilov--Yamamoto}.

\begin{lemma} \label{1orderTerms}
  Let the usual assumptions hold and $q_1,q_2 \in L^p(\Omega)$ with
  $4/3<p<2$. For $j\in\{1,2\}$ and $m\in\N$, let $F_{j,m}$ be as in
  Definition \ref{fDef}. Moreover let $\beta_j \in L^\infty(X)$ with
  respect to the $z_0$-variable. Then
  \[
  \lim_{\tau\to\infty} \norm{ \frac{2\tau}{\pi} \int (q_1-q_2)
    e^{i\tau(\Phi+\overline\Phi)} F_{1,k} F_{2,l} dx}_2 \leq C \norm{
    \beta_2 - \Ca q_2 }_{p*} \norm{q_1-q_2}_p
  \]
  for $k=1$, $l=0$, where the $L^2(X)$-norm is taken over the variable
  $z_0$ and $C=C(p,\chi)$. A similar bound holds for $k=0$ and $l=1$.
\end{lemma}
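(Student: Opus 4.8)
The plan is to exploit the cancellation of the two phase factors that occurs in every first-order term: after this cancellation the integral becomes a single application of the stationary-phase operator $E$ of Lemma \ref{statPhase}, which absorbs the prefactor $2\tau/\pi$, and then the unitarity and the $\tau\to\infty$ limit of $E$ deliver the bound. I will carry out the term containing $\varphi_2$ (the one producing the factor $\norm{\beta_2-\Ca q_2}_{p*}$ displayed in the statement); the companion first-order term, containing $\varphi_1$, is entirely symmetric under $\Ca\leftrightarrow\Cab$ and $q_1\leftrightarrow q_2$ and yields $\norm{\beta_1-\Cab q_1}_{p*}$.

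First I would note that the relevant factor $F_{2,1}=\varphi_2$ is paired with $F_{1,0}=e^{-i\tau(\Phi+\overline\Phi)}$, so the exponential in the integrand cancels against the extra $e^{i\tau(\Phi+\overline\Phi)}$ and the term equals
\[
\frac{2\tau}{\pi}\int(q_1-q_2)\,\varphi_2\,dm(z).
\]
Next I would insert Definition \ref{varphiDef}, $\varphi_2=\tfrac14\Cab(e^{-i\tau(\Phi+\overline\Phi)}\chi(\beta_2(z_0)-\Ca q_2))$, and transfer $\Cab$ off the oscillatory factor and onto $q_1-q_2$ using its bilinear transpose $\int h\,\Cab g\,dm=-\int(\Cab h)\,g\,dm$ (a consequence of the kernel of $\Cab$ being $\tfrac{1}{\pi(\overline z-\overline{z'})}$). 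The Fubini interchange is legitimate since $q_1-q_2\in L^p$ while $\chi(\beta_2-\Ca q_2)\in L^{p*}$ by Lemma \ref{cauchyNorm}. This recasts the term as
\[
-\frac14\,\frac{2\tau}{\pi}\int e^{-i\tau(\Phi+\overline\Phi)}\,\chi\,(\beta_2(z_0)-\Ca q_2)\,\Cab(q_1-q_2)\,dm(z)=-\frac14\,E\big[\chi(\beta_2(z_0)-\Ca q_2)\,\Cab(q_1-q_2)\big](z_0),
\]
so that the entire $\tau$-dependence is now packaged inside $E$.

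The key step is the passage $\tau\to\infty$. Since $\beta_2(z_0)$ is constant in the integration variable $z$, I would split the argument of $E$ and write the term as $-\tfrac14\big(\beta_2\,E[\chi\Cab(q_1-q_2)]-E[\chi\,\Ca q_2\,\Cab(q_1-q_2)]\big)$. Both arguments of $E$ lie in $L^2(\C)$: here $p>4/3$ gives $p*>4$, hence $\Cab(q_1-q_2)\in L^{p*}$ and $\Ca q_2\in L^{p*}$, and after multiplying by the compactly supported $\chi$ the products sit in $L^{p*/2}\hookrightarrow L^2$. By Lemma \ref{statPhase} each $E[\,\cdot\,]\to(\,\cdot\,)$ in $L^2(z_0)$, and since $\beta_2\in L^\infty(X)$ in the $z_0$-variable the bounded multiplier passes to the limit. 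Thus the term converges in $L^2(z_0)$ to $-\tfrac14\chi(\beta_2-\Ca q_2)\Cab(q_1-q_2)$, and by continuity of the norm
\[
\lim_{\tau\to\infty}\norm{\frac{2\tau}{\pi}\int(q_1-q_2)\varphi_2\,dm}_{L^2(z_0)}=\frac14\norm{\chi(\beta_2-\Ca q_2)\,\Cab(q_1-q_2)}_{L^2}.
\]

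Finally I would estimate the right-hand side by H\"older and the mapping properties of the Cauchy transforms. With $\chi(\beta_2-\Ca q_2)\in L^{p*}$ and $\Cab(q_1-q_2)\in L^{p*}$ the product lies in $L^{p*/2}$, and since $p*/2\ge2$ on the bounded set $\supp\chi$ one gets $\norm{\cdot}_2\le C\norm{\beta_2-\Ca q_2}_{p*}\norm{\Cab(q_1-q_2)}_{p*}$; then $\Cab:L^p\to L^{p*}$ (Sobolev embedding together with Lemma \ref{cauchyNorm}) gives $\norm{\Cab(q_1-q_2)}_{p*}\le C\norm{q_1-q_2}_p$, which is the claim with $C=C(p,\chi)$. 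I expect the main obstacle to be the limit interchange of the third step: one must verify that the argument of $E$ genuinely belongs to $L^2(\C)$ — precisely where the hypothesis $p>4/3$ (equivalently $p*>4$) is used — and that the $z_0$-dependent multiplier $\beta_2(z_0)$ does not interfere with the $L^2$-convergence $E\to\mathrm{Id}$.
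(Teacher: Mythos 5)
Your proof is correct and follows essentially the same route as the paper's: cancel the phases, move $\Cab$ onto $q_1-q_2$ by Fubini (with the sign coming from the antisymmetric kernel), split off the $z_0$-dependent constant $\beta_2(z_0)$, apply the stationary-phase operator $E$ of Lemma \ref{statPhase} to the two $L^2$ arguments (using $p>4/3$, i.e. ${p*}>4$), and conclude with H\"older and the bound $\Cab \colon L^p \to L^{p*}$. The only discrepancy is notational: you attach the $\varphi_2$-term to $(k,l)=(0,1)$, which is what Definition \ref{fDef} actually dictates, whereas the paper's statement and proof label that same term $(k,l)=(1,0)$ --- a harmless index swap, since both first-order terms are treated by the identical symmetric argument.
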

\begin{proof}
  Recall that $\varphi_2 = \frac{1}{4} \Cab\big(
  e^{-i\tau(\Phi+\overline\Phi)} \chi (\beta_2(z_0) - \Ca q_2)\big)$
  and hence the integral becomes
  \[
  \frac{\tau}{2\pi} \int (q_1-q_2) \Cab\big(
  e^{-i\tau(\Phi+\overline\Phi)} \chi (\beta_2(z_0) - \Ca q_2) \big)
  dx
  \]
  when $k=1$, $l=0$. By Fubini's theorem this is equal to
  \[
  - \frac{\tau}{2\pi} \int e^{-i\tau(\Phi+\overline\Phi)} \chi
  (\beta_2(z_0) - \Ca q_2) \Cab(q_1-q_2) dx,
  \]
  and using the stationary phase operator of Lemma \ref{statPhase} it
  is equal to
  \[
  \frac{1}{4} E\big(\chi \Ca q_2 \Cab(q_1-q_2)\big)(z_0) - \frac{1}{4}
  \beta_2(z_0) E\big( \chi \Cab(q_1-q_2) \big)(z_0).
  \]

  We have $\Cab(q_1-q_2)\in L^{p*}(X)$ where ${p*}>4$ since $p>4/3$
  (e.g. Lemma \ref{cauchyNorm} and Sobolev embedding). Similarly $\chi
  \Ca q_2 \in L^{p*}(\C)$. Their product is in $L^2(\C)$ since $\chi$
  has compact support. Hence the operator $E$ is being applied to
  $L^2(\C)$-functions above. Since $z_0 \mapsto \beta_2(z_0)$ is
  uniformly bounded, the above converges to
  \[
  \frac{1}{4}\chi (\Ca q_2 - \beta_2) \Cab(q_1-q_2)
  \]
  in the $L^2(\C)$-norm with respect to $z_0$ as $\tau\to\infty$ by
  Lemma \ref{statPhase}. The claim follows from the norm estimates at
  the beginning of this paragraph.
\end{proof}

\begin{lemma} \label{0orderTerms}
  Let the usual assumptions hold and $q_1-q_2 \in L^2(\Omega)$. Then
  \[
  \lim_{\tau\to\infty} \norm{ \frac{2\tau}{\pi} \int (q_1-q_2)
    e^{i\tau(\Phi+\overline\Phi)} F_{1,0} F_{2,0} dx - (q_1-q_2) }_2 =
  0,
  \]
  where $F_{j,0}$ are as in Definition \ref{fDef} and the norm is
  taken with respect to the variable $z_0\in\C$.
\end{lemma}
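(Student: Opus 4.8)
The plan is to recognize that this lemma is essentially an immediate consequence of the stationary phase statement in Lemma~\ref{statPhase}, once the integrand is simplified. The first step is to substitute the definition $F_{1,0} = F_{2,0} = e^{-i\tau(\Phi+\overline\Phi)}$ from Definition~\ref{fDef}, so that $F_{1,0}F_{2,0} = e^{-2i\tau(\Phi+\overline\Phi)}$. Multiplying by the prefactor $e^{i\tau(\Phi+\overline\Phi)}$ appearing in the integrand, the three exponentials collapse to a single oscillatory factor $e^{-i\tau(\Phi+\overline\Phi)}$. Hence the quantity inside the norm becomes
\[
\frac{2\tau}{\pi} \int (q_1-q_2)\, e^{-i\tau(\Phi+\overline\Phi)}\, dm(z) \;-\; (q_1-q_2),
\]
and the first term is by definition exactly $E(q_1-q_2)(z_0)$, the stationary phase operator of Lemma~\ref{statPhase} applied to the function $q_1-q_2$.

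Next I would invoke the hypothesis $q_1-q_2 \in L^2(\Omega)$, which, after extension by zero, places $q_1-q_2$ in $L^2(\C)$ with compact support in $\overline\Omega$. Lemma~\ref{statPhase} then applies directly and yields
\[
\lim_{\tau\to\infty} \norm{E(q_1-q_2) - (q_1-q_2)}_{L^2(\C)} = 0,
\]
which is precisely the claimed limit, since the $L^2$-norm is taken with respect to the variable $z_0$.

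The only point requiring a small amount of care is that Lemma~\ref{statPhase} defines $E$ initially on $C^\infty_0(\C)$ and then extends it unitarily to all of $L^2(\C)$ via Parseval. I would therefore note that because $q_1-q_2$ is compactly supported and in $L^2$, it also lies in $L^1(\C)$, so the concrete integral $\frac{2\tau}{\pi}\int (q_1-q_2) e^{-i\tau(\Phi+\overline\Phi)}\,dm$ converges absolutely and agrees with the value of the unitarily extended operator $E$ on this function (by a density argument approximating $q_1-q_2$ by $C^\infty_0$ functions in $L^2$, using the uniform $L^2\to L^2$ bound of $E$). This identification is the sole (and minor) obstacle; there is no genuine analytic difficulty, as all the oscillatory decay has already been packaged into Lemma~\ref{statPhase}. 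Once the integrand is rewritten and this identification is made, the conclusion follows.
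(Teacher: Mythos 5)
Your proof is correct and follows exactly the paper's (one-line) argument: substitute $F_{j,0}=e^{-i\tau(\Phi+\overline\Phi)}$, observe the exponentials collapse so the integral is $E(q_1-q_2)(z_0)$, and apply Lemma~\ref{statPhase}. The extra care you take in identifying the concrete integral with the unitary extension of $E$ on the compactly supported $L^2$ function is a valid (and slightly more thorough) elaboration of the same route.
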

\begin{proof}
  This follows directly from $F_{j,0} =
  \exp(-i\tau(\Phi+\overline\Phi))$ and the stationary phase Lemma
  \ref{statPhase}.
\end{proof}

\bigskip
We are ready to prove uniqueness for the inverse problem with
potential in $L^p$, $4/3<p<2$.

\begin{proof}[Proof of Theorem \ref{uniquenessThm}]
  In view of Green's identity and the symmetry of the DN map, we can
  see that the condition of identical Dirichlet-Neumann maps imply
  that
  \[
  \int_\Omega (q_1-q_2) u_1 u_2 dx = 0
  \]
  for any solution $u_j \in W^{1,2}(\Omega)$ to $(\Delta+q_j)u_j=0$ in
  $\Omega$. We also note that Theorem 3 in \cite{Serov--Paivarinta}
  implies that $q_1-q_2 \in H^t(\Omega)$ for any $t<3-4/p$. Hence
  $q_1-q_2 \in L^2(\Omega)$.

  Let $\varepsilon>0$ and take $\beta_j\in C^\infty_0(X)$ such that
  \begin{equation}\label{0411-1}
    \lVert \beta_1 - \Cab q_1 \rVert_{L^{p*}(X)} < \varepsilon, \quad
    \lVert \beta_2 - \Ca q_2 \rVert_{L^{p*}(X)} < \varepsilon
  \end{equation}
  which is possible since $\Cab q_1, \Ca q_2 \in L^{p*}$ by Sobolev
  embedding and Lemma \ref{cauchyNorm}. Let $z_0\in\C$ and from now
  $\beta_1$ and $\beta_2$ shall be evaluated at $z_0$ if not mentioned
  otherwise, and note that they are uniformly bounded. Then, given
  $\tau>1$ large enough let $u_1 = e^{i\tau\Phi}f_1$ and $u_2 =
  e^{i\tau\overline\Phi}f_2$ be the solutions in the variable $z$ with
  parameter $z_0$, given by Theorem \ref{CGOexistence}. They are in
  $W^{1,2}(X)$ and satisfy $(\Delta+q_j)u_j=0$ in $\Omega$. By Lemma
  \ref{alessandriniTermByTerm} we have
  \[
  \frac{2\tau}{\pi} \int (q_1-q_2) u_1 u_2 dx = \sum_{k+l=0}^\infty
  \frac{2\tau}{\pi} \int (q_1-q_2) e^{i\tau(\Phi+\overline\Phi)}
  F_{1,k} F_{2,l} dx.
  \]

  In view of  lemmas \ref{highOrderTerms} and \ref{2orderTerms}
  \begin{align*}
    &\sum_{k+l=2}^\infty \frac{2\tau}{\pi} \abs{\int (q_1-q_2)
      e^{i\tau(\Phi+\overline\Phi)} F_{1,k} F_{2,l} dx} \leq C
    \tau^{1/p - 3/4} + \sum_{k+l=3}^\infty C^{k+l}
    \tau^{-(k+l-2)\SinfinfExp} \\ &\qquad = C \left( \tau^{1/p - 3/4}
    + \sum_{N=3}^\infty (N+1) (C\tau)^{-(N-2)\SinfinfExp} \right)
  \end{align*}
  for any $z_0 \in \R^2$, and where $C = C_{p,q_1,q_2,\Omega} ( 1 +
  \norm{\beta_1}_\infty + \norm{\beta_2}_\infty)$. Recall that $p>4/3$
  so the first exponent is negative. Note that for $\tau$ sufficiently
  large, $(C\tau)^{-\SinfinfExp} < 1$ so the sum can be rewritten as
  \begin{align*}
    &\sum_{N=3}^\infty (N+1) ((C\tau)^{-\SinfinfExp})^{N-2} =
    \sum_{N=1}^\infty (N+1) ((C\tau)^{-\SinfinfExp})^N + 2
    \sum_{N=1}^\infty ((C\tau)^{-\SinfinfExp})^N \\ &\qquad =
    \frac{1}{(1-(C\tau)^{-\SinfinfExp})^2} - 1 +
    \frac{2(C\tau)^{-\SinfinfExp}}{1-(C\tau)^{-\SinfinfExp}},
  \end{align*}
  which tends to zero as $\tau\to\infty$. Hence the sum of the terms
  with $k+l\geq2$ in the original sum tends to zero when $\beta_j$ are
  fixed.

  For the terms with $k+l\in\{0,1\}$ we will use lemmas
  \ref{1orderTerms} and \ref{0orderTerms}. By them
  \begin{align*}
    &\lim_{\tau\to\infty} \norm{\sum_{k+l=0}^1 \frac{2\tau}{\pi} \int
      (q_1-q_2) e^{i\tau(\Phi+\overline\Phi)} F_{1,k} F_{2,l} dx -
      (q_1-q_2)}_{L^2(X)} \\ &\qquad \leq C( \lVert \beta_1 - \Cab q_1
    \rVert_{L^{p*}(X)} + \lVert \beta_2 - \Ca q_2 \rVert_{L^{p*}(X)} )
    \leq 2C \varepsilon
  \end{align*}
  where the $L^2(X)$-norm is taken with respect to $z_0$ and this time
  $C$ does not depend on $\beta_1$ or $\beta_2$. We can redo this
  whole argument for any $\varepsilon>0$, and thus by Alessandrini's
  identity
  \[
  \norm{q_1-q_2}_{L^2(\Omega)} \leq \lim_{\tau\to\infty} \norm{q_1-q_2
    - \sum_{k+l=0}^\infty \frac{2\tau}{\pi} \int (q_1-q_2)
    e^{i\tau(\Phi+\overline\Phi)} F_{1,k} F_{2,l} dx}_{L^2(\Omega)}
  \]
  the latter of which can be made as small as we please by choosing
  $\beta_1, \beta_2$. The claim follows.
\end{proof}

\section{Appendix 1: Cauchy operator and integration by parts} \label{cauchySection}

We define the two fundamental tools for solving the two-dimensional
inverse problem of the Schr\"odinger operator in this section: the
Cauchy operators and an integration by parts formula for the Cauchy
operator conjugated by an exponential. These were used by Bukhgeim
\cite{bukhgeim} for solving the problem.

\begin{definition} \label{cauchyDef}
  Let $u \in \mathscr{E}'(\R^2)$ be a compactly supported
  distribution. Then we define the \emph{Cauchy operators} by
  \[
  \Ca u = \frac{1}{\pi z} \ast u, \qquad \Cab u = \frac{1}{\pi
    \overline{z}} \ast u.
  \]
\end{definition}

\begin{remark} \label{cauchyWellDefined}
  The notations $\overline\partial^{-1}$ and $\partial^{-1}$ cause no
  problems because $1/(\pi z)$ and $1/(\pi \overline{z})$ are the
  fundamental solutions to the operators $\overline\partial =
  (\partial_1 + i\partial_2)/2$ and $\partial = (\partial_1 -
  i\partial_2)/2$.
\end{remark}

\begin{lemma}
  \label{intByPartsLemma}
  Let $\tau > 0$, $z_0 \in \C$ and $\Phi(z) = (z-z_0)^2$. Let $\psi
  \in C^\infty_0(\R^2)$ with $\psi \equiv 1$ in a neighbourhood of
  $0$, and write
  \[
  \psi_\tau(z) = \psi( \tau^{1/2}(z-z_0) ), \qquad h(z) = \frac{1 -
    \psi_\tau(z) }{\overline{z}-\overline{z_0}}.
  \]
Then for $a \in C^\infty_0(\R^2)$ we have the integration by parts formula
  \begin{align*}
    &\Ca(e^{-i\tau(\Phi + \overline{\Phi})}a) = \Ca\big(e^{-i\tau(\Phi
      + \overline{\Phi})} \psi_\tau a\big) \\ &\qquad\quad -
    \frac{1}{2i\tau} \big( e^{-i\tau(\Phi + \overline{\Phi})} h a -
    \Ca(e^{-i\tau(\Phi + \overline{\Phi})} \db h a) - \Ca(
    e^{-i\tau(\Phi + \overline{\Phi})} h \db a) \big).
  \end{align*}
  If we had set $h(z) = (1-\psi_\tau(z))/(z-z_0)$ instead then
  \begin{align*}
    &\Cab(e^{-i\tau(\Phi + \overline{\Phi})} a) =
    \Cab\big(e^{-i\tau(\Phi + \overline{\Phi})} \psi a\big)
    \\ &\qquad\quad - \frac{1}{2i\tau}\big( e^{-i\tau(\Phi +
      \overline{\Phi})} h a - \Cab(e^{-i\tau(\Phi + \overline{\Phi})}
    \d h a) - \Cab(e^{-i\tau(\Phi + \overline{\Phi})} h \d a)\big).
  \end{align*}
\end{lemma}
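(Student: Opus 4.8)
The plan is to exploit that the phase $\Phi+\overline\Phi$ is engineered so that differentiating the exponential produces a linear factor that can be divided out. Since $\Phi$ is holomorphic and $\overline\Phi=(\overline z-\overline{z_0})^2$, one computes $\db(\Phi+\overline\Phi)=2(\overline z-\overline{z_0})$ and hence
\[
\db\big(e^{-i\tau(\Phi+\overline\Phi)}\big)=-2i\tau\,(\overline z-\overline{z_0})\,e^{-i\tau(\Phi+\overline\Phi)}.
\]
Equivalently $(\overline z-\overline{z_0})\,e^{-i\tau(\Phi+\overline\Phi)}=-\tfrac{1}{2i\tau}\db\big(e^{-i\tau(\Phi+\overline\Phi)}\big)$, so each factor of $(\overline z-\overline{z_0})$ can be traded for a $\db$-derivative at the cost of a gain $\tau^{-1}$. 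The obstruction to using this directly is the singularity of $1/(\overline z-\overline{z_0})$ at $z_0$, which is exactly why the cut-off $\psi_\tau$ is introduced.

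First I would split $a=\psi_\tau a+(1-\psi_\tau)a$ and apply $\Ca$ by linearity, leaving the term $\Ca(e^{-i\tau(\Phi+\overline\Phi)}\psi_\tau a)$ untouched. For the far-field piece I would use that $1-\psi_\tau=(\overline z-\overline{z_0})\,h$ by the definition of $h$, so that
\[
(1-\psi_\tau)\,e^{-i\tau(\Phi+\overline\Phi)}a=h\,a\,(\overline z-\overline{z_0})\,e^{-i\tau(\Phi+\overline\Phi)}=-\frac{1}{2i\tau}\,h\,a\,\db\big(e^{-i\tau(\Phi+\overline\Phi)}\big).
\]
Since $1-\psi_\tau$ vanishes in a neighbourhood of $z_0$, the function $h$ is smooth, and because $a\in C^\infty_0(\R^2)$ the product $e^{-i\tau(\Phi+\overline\Phi)}h a$ is a compactly supported smooth function with no singularity to worry about.

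Next I would move the derivative off the exponential by the Leibniz rule $\db(e^{-i\tau(\Phi+\overline\Phi)}h a)=\db(e^{-i\tau(\Phi+\overline\Phi)})\,h a+e^{-i\tau(\Phi+\overline\Phi)}\db(h a)$ together with $\db(ha)=(\db h)a+h\,\db a$, which rearranges to
\[
h\,a\,\db\big(e^{-i\tau(\Phi+\overline\Phi)}\big)=\db\big(e^{-i\tau(\Phi+\overline\Phi)}h a\big)-e^{-i\tau(\Phi+\overline\Phi)}(\db h)a-e^{-i\tau(\Phi+\overline\Phi)}h\,\db a.
\]
Applying $\Ca$ and using that $1/(\pi z)$ is the fundamental solution of $\db$, so that $\Ca\db v=v$ for compactly supported $v$ (here $v=e^{-i\tau(\Phi+\overline\Phi)}h a$), collapses the first term on the right to $e^{-i\tau(\Phi+\overline\Phi)}h a$. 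Substituting back through the $-\tfrac{1}{2i\tau}$ factor and recombining with the untouched cut-off term yields exactly the asserted identity.

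The second formula is proved the same way after interchanging the roles of $\d$ and $\db$: now $\d(\Phi+\overline\Phi)=2(z-z_0)$ gives $\d(e^{-i\tau(\Phi+\overline\Phi)})=-2i\tau(z-z_0)e^{-i\tau(\Phi+\overline\Phi)}$, one writes $1-\psi_\tau=(z-z_0)h$ with the new choice of $h$, and uses that $1/(\pi\overline z)$ is the fundamental solution of $\d$ so that $\Cab\d v=v$. The only genuinely technical point throughout is the smoothness of $h$ away from the removed singularity, which guarantees that the integration by parts produces no boundary or singular contributions and that the fundamental-solution identity applies verbatim; once that is in hand the computation is purely algebraic.
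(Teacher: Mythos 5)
Your proof is correct and follows essentially the same route as the paper's own (one-line) argument: differentiate $e^{-i\tau(\Phi+\overline\Phi)}ha$ via the Leibniz rule and use that $\Ca\db$ (respectively $\Cab\d$) is the identity on compactly supported distributions. Your write-up simply makes explicit the computation $\db(e^{-i\tau(\Phi+\overline\Phi)})=-2i\tau(\overline z-\overline{z_0})e^{-i\tau(\Phi+\overline\Phi)}$ and the smoothness and compact support of $ha$ that the paper leaves implicit.
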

\begin{proof}
  The proof follows by differentiating $e^{-i\tau(\Phi+\overline\Phi)}
  h a$ and noting that by Remark \ref{cauchyWellDefined} the operators
  $\Ca \db$ and $\Cab \d$ are the identity on compactly supported
  distributions.
\end{proof}

\begin{lemma} \label{cauchyNorm}
  Let $X \subset \R^2$ be a bounded domain and $1<p<\infty$. Then the
  Cauchy operators $\Ca$ and $\Cab$ are bounded $L^p(X) \to W^{1,p}(X)$.
\end{lemma}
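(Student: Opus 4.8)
The plan is to decouple the $L^p$ bound on $\Ca u$ itself from the bounds on its first-order derivatives, using that control of the Wirtinger derivatives $\d$ and $\db$ in $L^p$ is equivalent to control of the full gradient: since $\d_1 = \d + \db$ and $\d_2 = i(\d - \db)$, one has $\norm{\Ca u}_{W^{1,p}(X)} \leq C(\norm{\Ca u}_{L^p(X)} + \norm{\d \Ca u}_{L^p(X)} + \norm{\db \Ca u}_{L^p(X)})$. So it suffices to bound these three quantities by $\norm{u}_{L^p(X)}$, where throughout $u$ is extended by zero outside $X$.

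For the zeroth-order bound I would use that $1/(\pi z)\in L^1_{loc}(\R^2)$. Writing $\Ca u = (\pi z)^{-1} \ast u$ and restricting to $X$, only the values of the kernel on the bounded difference set $X-X$ enter, so $(\pi z)^{-1}\mathbf 1_{X-X}\in L^1(\R^2)$ and Young's inequality gives $\norm{\Ca u}_{L^p(X)} \leq \norm{(\pi z)^{-1}}_{L^1(X-X)}\norm{u}_{L^p}$. For the $\db$-derivative I would invoke Remark \ref{cauchyWellDefined}: since $1/(\pi z)$ is the fundamental solution of $\db$, one has $\db \Ca u = u$ in $\mathscr D'$, whence $\norm{\db \Ca u}_{L^p(X)} = \norm{u}_{L^p(X)}$. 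The only remaining term is $\d \Ca u$, and here the distributional identity $\d\big((\pi z)^{-1}\big) = -\,\mathrm{p.v.}\,(\pi z^2)^{-1}$ shows that $\d\Ca u = Bu$, where $B$ is the Beurling transform $Bu(z) = -\pi^{-1}\,\mathrm{p.v.}\!\int (z-w)^{-2} u(w)\,dm(w)$.

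The crux is therefore the $L^p$-boundedness of $B$ for $1<p<\infty$, which is the main obstacle since $B$ is a genuine singular integral rather than a locally integrable convolution. I would handle it by Calder\'on--Zygmund theory. On the Fourier side $B$ is a multiplier of modulus one (symbol $\overline\xi/\xi$), so Plancherel makes it an isometry on $L^2(\R^2)$; its kernel $-(\pi z^2)^{-1}$ satisfies the standard size and smoothness (H\"ormander) conditions, so the $L^2$ bound upgrades to weak-$(1,1)$ and thence, by Marcinkiewicz interpolation, to boundedness on $L^p$ for $1<p\leq 2$, with the range $2\leq p<\infty$ following by duality (the formal adjoint of $B$ is again a Beurling-type operator). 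Restricting the global bound $\norm{Bu}_{L^p(\R^2)}\leq C_p\norm{u}_{L^p(\R^2)}$ back to $X$ controls $\norm{\d\Ca u}_{L^p(X)}$, completing the estimate for $\Ca$.

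Finally, the statement for $\Cab = \d^{-1}$ follows from that for $\Ca$ by conjugation symmetry. Since $1/(\pi\overline z) = \overline{1/(\pi z)}$, one has $\Cab u = \overline{\Ca\,\overline u}$; complex conjugation is an isometry on $L^p(X)$ and on $W^{1,p}(X)$ and merely interchanges the roles of $\d$ and $\db$, so the bound $\norm{\Cab u}_{W^{1,p}(X)}\leq C\norm{u}_{L^p(X)}$ is immediate from the case of $\Ca$.
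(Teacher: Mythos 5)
Your proposal is correct and follows essentially the same route as the paper's proof: Young's inequality with the kernel restricted to the bounded difference set $X-X$ for the zeroth-order bound, the identity $\db \Ca u = u$ from the fundamental-solution property, and the identification of $\d \Ca$ with the Beurling transform. The only difference is that the paper simply cites the literature for the $L^p(\R^2)$-boundedness of the Beurling transform, whereas you sketch the standard Calder\'on--Zygmund/multiplier argument for it (and make the conjugation symmetry $\Cab u = \overline{\Ca\,\overline{u}}$ explicit); both are sound.
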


\begin{proof}
  If $f\in L^p(X)$ we extend it by zero to $\R^2\setminus X$ to create
  a compactly supported distribution and thus $\Ca f$ is well defined
  by Definition \ref{cauchyDef}. The convolution kernel $1/(\pi z)$ is
  locally integrable, so by Young's inequality
  \[
  \norm{ \Ca f }_{L^p(X)} \leq C \norm{f}_{L^p(X)},
  \]
  because in essence $\Ca f$ has the same values in $X$ as the
  convolution of $f$ with the kernel $\chi_{X-X}(z)/(\pi z)$, where
  $X-X = \{ z \in \R^2 \mid z = z_1-z_2, z_j \in \R^2\}$.

  For the derivatives note that by Remark \ref{cauchyWellDefined} we
  have $\db \Ca f = f$. On the other hand $\d \Ca f = \Pi f$ which is
  the Beurling transform, and hence bounded $L^p(X) \to L^p(X)$. For
  reference see for example Section 4.5.2 in
  \cite{Astala--Iwaniec--Martin} or \cite{Vekua} for a more classical
  approach.
\end{proof}

\section{Appendix 2: Cut-off function estimates} \label{cutoffs}

This section contains all the technical cut-off function construction
and norm estimates used in the paper.

\begin{lemma} \label{bumpNorm}
  Let $\psi \in C^\infty_0(\R^2)$. For $z_0\in\R^2$ and $\tau>0$ write
  $\psi_\tau(z) = \psi( \tau^{1/2}(z-z_0) )$. Then, given any vector
  $v\in\C^2$, we have
  \[
  \norm{\psi_\tau}_{L^p(\R^2)} = \norm{\psi}_{L^p(\R^2)} \tau^{-1/p},
  \qquad \norm{ v\cdot\nabla \psi_\tau }_{L^p(\R^2)} = \norm{
    v\cdot\nabla \psi }_{L^p(\R^2)} \tau^{1/2-1/p}
  \]
  for $1\leq p \leq \infty$.
\end{lemma}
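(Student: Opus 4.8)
The plan is to treat both identities as elementary consequences of the behaviour of $L^p$-norms under dilation and translation, so the entire argument is a single change of variables together with the chain rule. I would first record that $\psi_\tau$ is obtained from $\psi$ by translating by $z_0$ and then contracting by the factor $\tau^{1/2}$, i.e. $\psi_\tau(z) = \psi(\tau^{1/2}(z-z_0))$, and that translation leaves every $L^p(\R^2)$-norm unchanged, so only the dilation matters.

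For the first identity I would substitute $w = \tau^{1/2}(z-z_0)$. Since we work in $\R^2$, the Jacobian of this substitution is $dm(z) = \tau^{-1}\, dm(w)$ (the factor is $\tau^{-1}$, not $\tau^{-1/2}$, because both coordinates are scaled). For $1\leq p<\infty$ this gives
\[
\norm{\psi_\tau}_{L^p(\R^2)}^p = \int_{\R^2} \abs{\psi(\tau^{1/2}(z-z_0))}^p\, dm(z) = \tau^{-1} \int_{\R^2} \abs{\psi(w)}^p\, dm(w) = \tau^{-1}\norm{\psi}_{L^p(\R^2)}^p,
\]
and taking $p$-th roots yields $\norm{\psi_\tau}_p = \tau^{-1/p}\norm{\psi}_p$. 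The case $p=\infty$ I would handle separately by noting that the essential supremum is invariant under both translation and dilation, which is exactly the claimed formula read with $1/\infty = 0$.

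For the second identity I would compute $\nabla\psi_\tau$ by the chain rule. Since $v\in\C^2$ is constant, differentiating each factor produces $v\cdot\nabla\psi_\tau(z) = \tau^{1/2}\,(v\cdot\nabla\psi)(\tau^{1/2}(z-z_0))$; in other words $v\cdot\nabla\psi_\tau = \tau^{1/2}(v\cdot\nabla\psi)_\tau$, where the subscript denotes the same translate-and-contract operation applied now to the function $v\cdot\nabla\psi \in C^\infty_0(\R^2)$. Applying the first identity to this function gives
\[
\norm{v\cdot\nabla\psi_\tau}_{L^p(\R^2)} = \tau^{1/2}\,\norm{(v\cdot\nabla\psi)_\tau}_{L^p(\R^2)} = \tau^{1/2}\,\tau^{-1/p}\,\norm{v\cdot\nabla\psi}_{L^p(\R^2)} = \tau^{1/2-1/p}\,\norm{v\cdot\nabla\psi}_{L^p(\R^2)},
\]
as required. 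There is no genuine obstacle here; the only points demanding care are bookkeeping ones, namely getting the planar Jacobian factor $\tau^{-1}$ correct and keeping track of the extra $\tau^{1/2}$ that the chain rule contributes to the gradient, which is precisely what shifts the exponent from $-1/p$ to $1/2-1/p$.
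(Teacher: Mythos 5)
Your proposal is correct and is exactly the argument the paper intends: the paper's proof is a one-line appeal to "the scaling properties and translation invariance of $L^p$-norms in $\R^2$," and you have simply written out those details (the planar Jacobian $\tau^{-1}$, the $p=\infty$ case, and the chain-rule factor $\tau^{1/2}$ for the gradient). Nothing further is needed.
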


\begin{proof}
  This follows directly from the scaling properties and translation
  invariance of $L^p$-norms in $\R^2$.
\end{proof}

\begin{lemma} \label{1/zNorm}
  Let $\tau>0$ and set $\R^2_\tau = \R^2 \setminus
  B(0,\tau^{-1/2})$. Then
  \[
  \norm{ z^{-a} }_{L^p(\R^2_\tau)} = \left( \frac{2\pi}{ap-2}
  \right)^{1/p} \tau^{a/2-1/p}
  \]
  for $a>0$ and $2/a<p\leq\infty$.
\end{lemma}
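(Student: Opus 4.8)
The plan is to reduce this to a one-dimensional elementary integral by exploiting the radial symmetry of the integrand $\abs{z}^{-a}$. First I would write, for finite $p$,
\[
\norm{z^{-a}}_{L^p(\R^2_\tau)}^p = \int_{\R^2_\tau} \abs{z}^{-ap}\, dm(z),
\]
and pass to polar coordinates $z = re^{i\theta}$, under which $dm(z) = r\, dr\, d\theta$ and the region $\R^2_\tau = \R^2\setminus B(0,\tau^{-1/2})$ becomes $\{(r,\theta) : r > \tau^{-1/2},\ 0\le\theta<2\pi\}$. Since the integrand depends only on $r$, the angular integration contributes a factor $2\pi$ and leaves
\[
\norm{z^{-a}}_{L^p(\R^2_\tau)}^p = 2\pi \int_{\tau^{-1/2}}^\infty r^{1-ap}\, dr.
\]

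Next I would evaluate this radial integral. The integrand $r^{1-ap}$ is integrable at infinity precisely when $1-ap < -1$, i.e. $ap > 2$, which is exactly the stated hypothesis $p > 2/a$; this is where that condition enters. Under it, $2-ap<0$, so the antiderivative $r^{2-ap}/(2-ap)$ vanishes at $r=\infty$, and evaluating at the lower endpoint gives
\[
\int_{\tau^{-1/2}}^\infty r^{1-ap}\, dr = -\frac{(\tau^{-1/2})^{2-ap}}{2-ap} = \frac{\tau^{(ap-2)/2}}{ap-2}.
\]
Hence $\norm{z^{-a}}_{L^p(\R^2_\tau)}^p = \frac{2\pi}{ap-2}\,\tau^{(ap-2)/2}$.

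Finally I would take the $p$-th root and simplify the exponent via $\tfrac{1}{p}\cdot\tfrac{ap-2}{2} = \tfrac{a}{2}-\tfrac{1}{p}$, which yields exactly the claimed expression $\left(\frac{2\pi}{ap-2}\right)^{1/p}\tau^{a/2-1/p}$. For the remaining case $p=\infty$ I would instead compute the supremum directly: since $r\mapsto r^{-a}$ is decreasing for $a>0$, we get $\norm{z^{-a}}_{L^\infty(\R^2_\tau)} = \sup_{r\ge\tau^{-1/2}} r^{-a} = \tau^{a/2}$, attained on the inner boundary, consistent with reading the formula at $p=\infty$ (the prefactor $(2\pi/(ap-2))^{1/p}$ tends to $1$). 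I do not anticipate any genuine obstacle here: the only points demanding care are verifying that the hypothesis $p>2/a$ is precisely the convergence threshold of the radial integral and checking the exponent arithmetic, while everything else is a routine computation.
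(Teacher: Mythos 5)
Your proof is correct and follows essentially the same route as the paper, which also reduces the claim to a one-dimensional radial integral via polar coordinates; the convergence threshold $p>2/a$, the exponent arithmetic, and the $p=\infty$ case are all handled properly.
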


\begin{proof}
  This is a direct computation using the polar coordinates integral
  transform $\int_{\R^2_\tau} \ldots dz = \int_{\tau^{-1/2}}^\infty
  \int_{\mathbb S^1} \ldots d\sigma(\theta) r dr$, with $z = r\theta$.
\end{proof}

\begin{lemma} \label{hNorm}
  Let $\psi \in C^\infty_0(\R^2)$ be a test function supported in
  $B(0,2)$ with $0\leq\psi\leq1$ and $\psi \equiv 1$ in $B(0,1)$.  For
  $\tau>0$ and $z_0\in\R^2$ write $\psi_\tau(z) = \psi(
  \tau^{1/2}(z-z_0) )$. Let $h(z) = (1 - \psi_\tau(z)) / (\overline{z}
  - \overline{z_0})$. Then
  \[
  \norm{h}_{L^p(\R^2)} \leq C_p \tau^{1/2-1/p}
  \]
  for $C_p<\infty$ when $2<p\leq\infty$ and for any complex vector
  $v\in\C^2$ we have
  \[
  \norm{v\cdot\nabla h}_{L^p(\R^2)}\leq C_{\psi,p,v} \tau^{1-1/p}
  \]
  for $C_{\psi,p,v}<\infty$ when $1\leq p \leq \infty$. The same
  conclusions hold if we had defined $h$ by dividing $1-\psi_\tau$ by
  $z-z_0$ instead of its complex conjugate.
\end{lemma}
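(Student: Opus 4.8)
The plan is to reduce both estimates to the elementary singular integrals of Lemma~\ref{1/zNorm}, after peeling off the $\tau$-dependence of the cut-off through Lemma~\ref{bumpNorm}. By the translation invariance of the $L^p$-norms I may assume $z_0=0$ and write $h=(1-\psi_\tau)/\overline z$. The structural fact driving everything is that $\psi_\tau\equiv1$ on $B(0,\tau^{-1/2})$, so $1-\psi_\tau$ vanishes there while satisfying $0\le 1-\psi_\tau\le1$ on all of $\R^2$. Consequently the singularity of $1/\overline z$ sits inside the region where $1-\psi_\tau\equiv0$, so $h$ is in fact smooth and both $h$ and $\nabla h$ are supported in $\R^2_\tau=\R^2\setminus B(0,\tau^{-1/2})$; in particular no distributional term at the origin is produced when differentiating.

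For the first estimate I would use the pointwise bound $\abs{h(z)}\le\abs{z}^{-1}$ on $\R^2_\tau$, immediate from $0\le 1-\psi_\tau\le1$, to get $\norm{h}_{L^p(\R^2)}\le\norm{z^{-1}}_{L^p(\R^2_\tau)}$. Lemma~\ref{1/zNorm} with $a=1$, valid for $2<p\le\infty$, then gives exactly $C_p\tau^{1/2-1/p}$, and the admissibility condition $2/a<p$ is precisely the stated range.

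For the gradient estimate I would split by the Leibniz rule,
\[
v\cdot\nabla h=-\frac{v\cdot\nabla\psi_\tau}{\overline z}+(1-\psi_\tau)\,v\cdot\nabla\!\Big(\frac1{\overline z}\Big),
\]
and bound the two terms separately. The first is supported in the annulus $\tau^{-1/2}\le\abs z\le 2\tau^{-1/2}$, where $\nabla\psi_\tau$ is nonzero and $\abs{\overline z}^{-1}\le\tau^{1/2}$; hence its $L^p$-norm is at most $\tau^{1/2}\norm{v\cdot\nabla\psi_\tau}_p=\tau^{1/2}\norm{v\cdot\nabla\psi}_p\,\tau^{1/2-1/p}$ by Lemma~\ref{bumpNorm}, which is $C\tau^{1-1/p}$ for every $1\le p\le\infty$. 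For the second term each component of $\nabla(1/\overline z)$ is $O(\abs z^{-2})$, so together with $0\le 1-\psi_\tau\le1$ and its vanishing on $B(0,\tau^{-1/2})$ this term is dominated pointwise by $C_v\abs z^{-2}$ on $\R^2_\tau$; Lemma~\ref{1/zNorm} with $a=2$ then yields $C\tau^{1-1/p}$. Adding the two bounds gives the claim.

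The routine but necessary care lies in the exponent bookkeeping and in checking when the singular integrals actually converge: Lemma~\ref{1/zNorm} needs $2/a<p$, which forces $p>2$ for the $h$-estimate and $p>1$ for the $\abs z^{-2}$ tail of the gradient (the compactly supported piece of $\nabla h$ is harmless for all $p\ge1$). The variant with $h=(1-\psi_\tau)/(z-z_0)$ is handled by the identical argument with the roles of $\d$ and $\db$ interchanged; the pointwise bounds $\abs z^{-1}$ and $\abs z^{-2}$ are unchanged, so the same constants and $\tau$-powers result.
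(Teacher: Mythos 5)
Your proof is correct and follows essentially the same route as the paper: the pointwise bound $\abs{h}\le\abs{z-z_0}^{-1}$ on $\R^2_\tau$ combined with Lemma~\ref{1/zNorm} for the first estimate, and the same Leibniz splitting of $v\cdot\nabla h$ with Lemmas~\ref{bumpNorm} and~\ref{1/zNorm} for the second. Your remark that the $\abs{z-z_0}^{-2}$ tail requires $p>1$ is a fair observation (the stated range $1\le p\le\infty$ fails at $p=1$ over all of $\R^2$, an endpoint the paper's own proof glosses over and never actually uses), but otherwise the argument matches the paper's.
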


\begin{proof}
  For the first claim note that $\abs{h(z)} \leq \abs{z-z_0}^{-1}$ and
  $\supp h \subset \R^2_\tau + z_0 = \R^2 \setminus
  B(z_0,\tau^{-1/2})$. Hence $\norm{h}_{L^p(\R^2)} \leq \norm{ z^{-1}
  }_{L^p(\R^2_\tau)}$ and Lemma \ref{1/zNorm} takes care of the first
  estimate.

  For the second estimate
  \[
  v\cdot\nabla h(z) = \frac{v\cdot\nabla
    \psi_\tau(z)}{\overline{z}-\overline{z_0}} -
  \frac{1-\psi_\tau(z)}{(\overline{z}-\overline{z_0})^2}.
  \]
  The $L^p$-norm of the first term is bounded by $\norm{ v\cdot\nabla
    \psi_\tau }_{L^p} \norm{z^{-1}}_{L^\infty(\R^2_\tau)}$ which is at
  most $C_{\psi,p,v} \tau^{1-1/p}$ according to lemmas \ref{bumpNorm}
  and \ref{1/zNorm}. The second term is supported in $\R^2\setminus
  B(z_0,\tau^{-1/2})$ and bounded pointwise by
  $\abs{z-z_0}^{-2}$. Hence, as in the first paragraph, it has the
  required bound.
\end{proof}

\section*{Acknowledgements}
Wang was supported in part by MOST 105-2115-M-002-014-MY3.\\
Tzou was supported in part by ARC FT130101346 and VR 2012-3782.

\bibliographystyle{plain}
\bibliography{ownBib}

\end{document}